\numberwithin{equation}{section}
 \newcommand{\set}[1]{\left\{#1\right\}}
\newcommand{\abs}[1]{\left| #1\right|}
\newcommand{\bigabs}[1]{\bigl| #1 \bigr|}
\newcommand{\Bigabs}[1]{\Bigl| #1 \Bigr|}
\newcommand{\sqbrac}[1]{\left[ #1 \right]}
\newcommand{\ceil}[1]{\left\lceil #1 \right\rceil}
\newcommand{\floor}[1]{\left\lfloor #1 \right\rfloor}
\newcommand{\brac}[1]{\left( #1 \right)}
\newcommand{\Bigbrac}[1]{\Bigl( #1 \Bigr)}
\newcommand{\biggbrac}[1]{\biggl( #1 \biggr)}
\newcommand{\norm}[1]{\left\| #1\right\|}
\newcommand{\bignorm}[1]{\big\| #1 \big\|}
\newcommand{\ang}[1]{\left\langle#1\right\rangle}
\newcommand{\recip}[1]{\frac{1}{#1}}
\newcommand{\trecip}[1]{\tfrac{1}{#1}}
\newcommand{\N}{\mathbb{N}}
\newcommand{\Z}{\mathbb{Z}}
\newcommand{\R}{\mathbb{R}}
\newcommand{\C}{\mathbb{C}}
\newcommand{\T}{\mathbb{T}}
\newcommand{\E}{\mathbb{E}}
\newcommand{\lcm}{\mathrm{lcm}}
\newcommand{\supp}{\mathrm{supp}}
\let\@@pmod\pmod
\DeclareRobustCommand{\pmod}{\@ifstar\@pmods\@@pmod}
\def\@pmods#1{\mkern4mu({\operator@font mod}\mkern 6mu#1)}
\renewcommand{\leq}{\leqslant}
\renewcommand{\geq}{\geqslant}
\newtheorem{theorem}{Theorem}[section]
\newtheorem{corollary}[theorem]{Corollary}
\newtheorem{lemma}[theorem]{Lemma}
\theoremstyle{definition}
\newtheorem{definition}[theorem]{Definition}
\numberwithin{theorem}{section}
\title[Polylog nonlinear Roth]{A polylogarithmic bound in the nonlinear Roth theorem} 
\author{Sarah Peluse}
\address{Mathematical Institute\\
University of Oxford\\ 
UK}
\email{sarah.peluse@maths.ox.ac.uk}
\author{Sean Prendiville}
\address{Department of Mathematics and Statistics\\
Lancaster University\\
UK}
\email{s.prendiville@lancaster.ac.uk}
\begin{document}

\begin{abstract}
We show that sets of integers lacking the configuration $x$, $x+y$, $x+y^2$ have at most polylogarithmic density.   
\end{abstract}

\maketitle

\setcounter{tocdepth}{1}
\tableofcontents

\maketitle

\section{Introduction}\label{introduction}

\subsection{Density bound}
In \cite{PelusePrendivilleQuantitative} the authors obtained, for the first time, an  effective bound for subsets of $\set{1, \dots, N}$ lacking the nonlinear Roth configuration $x$, $x+y$, $x+y^2$. There it was established that such sets have cardinality at most $O(N/(\log\log N)^c)$, where $c> 0$ is an absolute constant. The key breakthrough of~\cite{PelusePrendivilleQuantitative} was a ``local $U^1$-control'' result, from which a bound for sets lacking the nonlinear Roth configuration follows via standard methods.  Here, we combine this local $U^1$-control result with a more sophisticated argument to remove a logarithm from the bound of~\cite{PelusePrendivilleQuantitative}.

\begin{theorem}[Density bound]\label{main}
There exists an absolute constant $c > 0$ such that the following holds. 
Suppose that $A \subset \set{1,\dots, N}$ lacks configurations of the form
\begin{equation}\label{main config}
x,\ x+y, \ x+y^2 \qquad (y \neq 0).
\end{equation}
Then
$$
|A| = O\left( N/(\log N)^{c}\right).
$$
\end{theorem}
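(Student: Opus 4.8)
The plan is to run a density-increment argument driven by the local $U^1$-control result of \cite{PelusePrendivilleQuantitative}, but to organise the iteration so that the \emph{total} loss in the length of the ambient progression is only a fixed power of $N$, rather than a power of $N$ at \emph{each} step. It is the latter, wasteful feature that confines the argument of \cite{PelusePrendivilleQuantitative} to $O(\log\log N)$ useful steps and hence to a double-logarithmic saving; the "more sophisticated argument" has to remove it.

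For the set-up, write $\alpha = \abs{A}/N$ and introduce the averaging operator
\[
\Lambda\brac{f_0,f_1,f_2} = \E_{x\in[N]}\ \E_{y\in[\sqrt{N}]}\ f_0(x)\,f_1(x+y)\,f_2(x+y^2)
\]
counting \eqref{main config}. If $\alpha \geq N^{-c_0}$ for a small absolute constant $c_0$, the only solutions to \eqref{main config} inside $A$ are the degenerate ones with $y = 0$, whose weighted contribution $\Oh{\alpha/\sqrt N}$ is negligible next to the random main term $\approx \alpha^3$; thus $\Lambda(1_A,1_A,1_A) = o(\alpha^3)$. Writing $1_A = \alpha + g$ and expanding $\Lambda$ multilinearly, some term involving the mean-zero function $g$ must then have magnitude $\gg \alpha^3$. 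Feeding the relevant functions into the local $U^1$-control result should produce a modulus $q \leq \alpha^{-O(1)}$, a residue $a$, and a progression $P = \set{a + qj : j \in [M]}$ on which $A$ has relative density at least $\alpha + c\,\alpha^{O(1)}$.

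Now one iterates. Restricting to $P$ and rescaling $P$ to $[M]$, the set $A' = \set{j\in[M]: a+qj\in A}$ has density at least $\alpha + c\,\alpha^{O(1)}$ and lacks the configuration $x,\ x+z,\ x+qz^2$ with $z\neq 0$ --- a dilate of \eqref{main config} in the difference variable. One reapplies $U^1$-control to this slightly more general degree-$2$ configuration (its sole coefficient is $q$) and repeats, the coefficient multiplying up by a factor of at most $\alpha^{-O(1)}$ at each stage. After $K = \alpha^{-O(1)}$ steps the density would exceed $1$, a contradiction --- \emph{provided} the process survives that long, and this is where the organisation pays off: each step costs only a factor $q\leq\alpha^{-O(1)}$ in length, so after $K$ steps the ambient progression still has length at least
\[
N\cdot\alpha^{O(K)} \;=\; N\exp\brac{-\alpha^{-O(1)}\log(1/\alpha)},
\]
and the accumulated configuration coefficient is likewise at most $\exp\brac{\alpha^{-O(1)}\log(1/\alpha)}$. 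Both quantities stay harmless --- the hypothesis discarding the degenerate solutions and the hypothesis allowing $U^1$-control both persist --- exactly so long as $\alpha \gg (\log N)^{-c}$. For $\alpha$ above this threshold one therefore reaches a contradiction, which is the theorem.

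The main obstacle, and the place where one must go beyond \cite{PelusePrendivilleQuantitative}, is to upgrade the conclusion of local $U^1$-control from a density increment on a \emph{short interval} --- which is what the proof there delivers, the variable $y$ ranging only over $[\sqrt N]$ and the Weyl-type estimates costing a root of $N$ --- to a density increment on a \emph{long progression of bounded common difference}: one has to isolate the genuinely mod-$q$ bias of $1_A$ (with $q$ polynomial in $1/\alpha$) as the only obstruction to equidistribution of \eqref{main config}, and extract the increment while paying just the factor $q$. Granting this, the remaining ingredients are of the type already handled in \cite{PelusePrendivilleQuantitative}: the multilinear expansion of $\Lambda$; the passage from "some term with $g$ is large" to "$g$ has large local $U^1$ norm at modulus $q$"; the treatment of the dilated configuration $x, x+z, x+qz^2$ with bounds polynomial in the coefficient; and the final optimisation of $c$. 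A minor bookkeeping matter is to verify that the admissibility condition $\alpha \gg (N')^{-c_0}$ for the current length $N'$ is maintained along the iteration, which is immediate from the merely polynomial per-step loss established above.
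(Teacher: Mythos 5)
Your plan pursues the ``global'' or ``dense-substructure'' strategy (a polynomial increment $\alpha \mapsto \alpha + c\alpha^{O(1)}$ on a progression of common difference $q\ll\alpha^{-O(1)}$ and length a constant fraction of $N$, iterated $\alpha^{-O(1)}$ times), which is a legitimate route in principle and is in fact flagged in \S\ref{sec3} of the paper as an alternative. However, the crucial step --- upgrading the short-interval increment of \cite[Theorem~7.1]{PelusePrendivilleQuantitative} (resolution $\sim N^{1/2}$) to an increment on a \emph{long} progression of bounded common difference --- is precisely what you ``grant'' rather than prove, and it is not merely bookkeeping. The example constructed after Corollary~\ref{global control cor} shows that, for generic $1$-bounded $f,g,h$ with a large nonlinear Roth count, only the function in the \emph{nonlinear} slot can be forced to correlate globally with a local function; the linear-slot functions genuinely need not equidistribute on progressions of length $\gg N^{1/2}$. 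When you expand $1_A = \alpha + g$ multilinearly, you have no control over which slot the surviving mean-zero term occupies, so the ``long-progression increment'' you invoke simply fails for most of the terms. Making the global strategy work would require a substantially more careful argument (the paper's phrase is ``a generalisation of Theorem~\ref{global control}''), and without it the iteration as described cannot be run.

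The paper takes a genuinely different route to avoid exactly this obstacle: it \emph{accepts} the $N\to N^{1/2}$ loss per step, but arranges for a \emph{constant-proportion} increment $\alpha \to (1+\Omega(1))\alpha$ rather than the weaker $\alpha + c\alpha^{O(1)}$, so the iteration terminates after only $\log_2(1/\alpha)+O(1)$ steps instead of $\alpha^{-O(1)}$. To manufacture a constant-proportion increment it needs three new ingredients you do not have: a cut-norm inverse theorem (Lemma~\ref{inverse theorem}) showing that a large counting operator forces correlation with a \emph{product of two} local functions; a weak regularity lemma (Lemma~\ref{weak regularity}) approximating $1_A$ by its projection onto a low-complexity local factor via an energy increment; and a Green--Tao-style combinatorial argument (Lemma~\ref{increment lemma}) to extract from that approximation a single atom on which the density jumps by a constant factor. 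None of these appears in your sketch, and the single piece of machinery you do rely on in an essential way is, as stated, blocked by the paper's own counterexample.
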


A careful analysis shows that the exponent $c = 2^{-150}$ is permissible, where 150 represents the combined number of times we utilise the Cauchy--Schwarz inequality in~\cite{PelusePrendivilleQuantitative} and this paper


\subsection{Major arc correlation}
The techniques which yield Theorem \ref{main} also allow us to show, in a quantitatively effective manner, that the major arc Fourier coefficients of a set  determine how many nonlinear Roth configurations \eqref{main config} the set contains.

\begin{theorem}[Major-arc control]\label{global control}
Let $\delta > 0$ and $f, g, h : \Z \to \C$ be 1-bounded functions with support in $\set{1, \dots, N}$.  Suppose that  
\[
\abs{\sum_{x\in\Z}\sum_{ y\in \N} f(x) g(x+y)h(x+y^2) }\geq \delta N^{3/2}.
\]
Then either $N \ll \delta^{-O(1)}$, or there is a frequency $\alpha \in \R$ and a positive integer $q \ll \delta^{-O(1)}$ such that\footnote{Here $\norm{\cdot}$ denotes the distance to the nearest integer, and $e(\alpha) := e^{2\pi i\alpha}$. For our conventions regarding asymptotic notation see \S\ref{notation}.} $\norm{q\alpha} \ll \delta^{-O(1)}/N$ and
$$
\abs{\sum_{x\in \Z} h(x)e(\alpha x) } \gg \delta^{O(1)} N.
$$
\end{theorem}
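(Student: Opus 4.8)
\emph{Overview.} The plan is to extract from the hypothesis a \emph{local} linear structure for $h$ by invoking the local $U^1$-control of \cite{PelusePrendivilleQuantitative}, and then to upgrade this to the \emph{global}, major-arc structure claimed, the upgrade being powered by the elementary fact that a quadratic exponential sum $\sum_{1 \le y \le Y} e(\beta y^2)$ has modulus $\gg \eta Y$ only if $\norm{q\beta} \ll \eta^{-O(1)} Y^{-2}$ for some positive integer $q \ll \eta^{-O(1)}$. Since $y$ ranges over $Y \asymp N^{1/2}$ values in our configuration, this produces major arcs of radius $\asymp 1/N$ --- precisely the scale appearing in the statement. Throughout we may assume that $N$ exceeds any prescribed power of $\delta^{-1}$, since otherwise the first alternative holds.

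\emph{Step 1: local structure.} Writing the count as $\sum_u g(u) \sum_y f(u-y) h(u - y + y^2)$ and applying the Cauchy--Schwarz inequality in $u$ removes $g$ (at the cost of a factor $O(N)$); the van der Corput / PET iteration and degree-lowering of \cite{PelusePrendivilleQuantitative} then remove $f$ and, after discarding negligible diagonal terms, produce a positive integer $q \ll \delta^{-O(1)}$, a real number $\beta$, and a $1$-bounded $q$-periodic function $\psi$ such that $h$ correlates with $x \mapsto e(\beta x) \psi(x)$ with magnitude $\gg \delta^{O(1)} N$. The point, however, is that \cite{PelusePrendivilleQuantitative} only witnesses this on progressions of length $\asymp N^{1/2}$: a priori the phase $\beta = \beta_I$ and the weight $\psi = \psi_I$ are allowed to vary from one length-$N^{1/2}$ window $I$ to the next, and one only controls $|\beta_I| \ll \delta^{-O(1)} N^{-1/2}$.

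\emph{Step 2: globalising (the main obstacle).} To promote this to a single global frequency, substitute the window-by-window description of $h$ back into a Cauchy--Schwarz-reduced form of the original count. Because the map $y \mapsto y^2$ carries the $\asymp N^{1/2}$ available values of $y$ across the whole range $[N]$ of the variable $x + y^2$, the length-$N^{1/2}$ windows become linked, and the inner sum over $y$ turns into a quadratic exponential sum of length $\asymp N^{1/2}$ in the frequencies $\beta_I$. The concentration fact from the overview then forces all of the $\beta_I$ to coincide, up to an error $O(\delta^{-O(1)}/N)$, with a single rational $a/q$ of denominator $q \ll \delta^{-O(1)}$, and forces the windowed correlations to be phase-coherent, so that they assemble into a genuine bound
\[
\Bigl| \sum_x h(x)\, e(\beta x)\, \psi(x) \Bigr| \gg \delta^{O(1)} N
\]
with $\psi$ now a single $q$-periodic $1$-bounded function and $\norm{q\beta} \ll \delta^{-O(1)}/N$. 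I expect this to be the crux: it is where the bounded-complexity information at scale $N^{1/2}$ must be reconciled quantitatively with the arithmetic of the squares, and it is presumably the ``more sophisticated argument'' referred to in the introduction. Extra care is needed because the Cauchy--Schwarz steps used to isolate $h$ disperse the relevant frequency, so one must track the correspondence between a frequency in the reduced count and one in $h$ itself, keeping the accumulated loss polynomial in $\delta$.

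\emph{Step 3: extracting the frequency.} Finally, expand $\psi$ in a Fourier series on $\Z/q\Z$; its Fourier coefficients have $\ell^1$-norm at most $q^{1/2} \ll \delta^{-O(1)}$, so by pigeonholing there is a residue $a \bmod q$ with $\bigl| \sum_x h(x)\, e\bigl( (\beta + a/q) x \bigr) \bigr| \gg \delta^{O(1)} N q^{-1/2} \gg \delta^{O(1)} N$. Taking $\alpha := \beta + a/q \in \R$, we have $q\alpha = q\beta + a$ with $a \in \Z$, so $\norm{q\alpha} = \norm{q\beta} \ll \delta^{-O(1)}/N$, while $\bigl| \sum_x h(x) e(\alpha x) \bigr| \gg \delta^{O(1)} N$; this is the required conclusion.
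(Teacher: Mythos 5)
Your Step~1 contains a genuine gap, and it is the central one: you claim that after a Cauchy--Schwarz in the $x$-variable to eliminate $g$, the PET/degree-lowering machinery of \cite{PelusePrendivilleQuantitative} removes $f$ and yields local linear structure \emph{for $h$}. But \cite[Theorem 7.1]{PelusePrendivilleQuantitative} only provides local $U^1$-control for the functions in the \emph{linear} slots of the pattern $f(x)g(x+y)h(x+y^2)$ --- that is, for $f$ and $g$ --- and not for the function $h$ attached to $y^2$. This is not a technicality: the entire reason Theorem~\ref{global control} needs a separate argument is that the nonlinear term resists the PET/degree-lowering approach. Lemma~\ref{partial inverse theorem} (the rephrasing of the cited result used throughout the paper) accordingly only speaks of the first two slots, as does the partial cut norm $\norm{\cdot}^\flat_{q,N}$, and even the full cut norm inverse theorem (Lemma~\ref{inverse theorem}) only obtains local control of $h$ at resolution $N^{1/2}$ by a pigeonholing step that does not produce the single global frequency your Step~2 would need as a starting point.

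The paper's route is quite different from the local-to-global scheme you sketch. It forms the dual function $F(x) = \E_{y \in [N^{1/2}]} f(x-y)\, h(x+y^2-y)$, applies Cauchy--Schwarz to place $F$ in a linear slot, and then uses a Lipschitz variant of the partial inverse theorem (Lemma~\ref{partial inverse theorem II}, built on Fej\'er averaging rather than the rigid local-function conclusion) to produce a $1$-bounded $\phi$ that is $O(\delta^{-O(1)}N^{-1/2})$-Lipschitz along $q\cdot\Z$, with $q \ll \delta^{-O(1)}$, and correlates with $F$. Unwinding the dual gives $\sum_x \sum_y f(x)\phi(x+y)h(x+y^2) \gg \delta^{O(1)}N^{3/2}$. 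Using the Lipschitz property to freeze $\phi(x+y)\approx\phi(x+y_P)$ on subprogressions $P$ of $y$ and pigeonholing in $P$ then reduces matters to a single S\'ark\"ozy-type bilinear form $\sum_x\sum_{y\in P\cap[N^{1/2}]} f_P(x)\,h(x+y^2) \gg \delta^{O(1)}N^{3/2}$. Only at this point does Fourier analysis enter: writing the count as $\int_\T \hat f_P(\alpha)\hat h(-\alpha)S_P(\alpha)\,d\alpha$, one removes the contribution of $\alpha$ where $|S_P(\alpha)|$ is small by Parseval, applies H\"older with the $L^6$ bound $\norm{S_P}_6 \ll N^{1/3}$ (Lemma~\ref{rog lem}) to find a single $\alpha$ where $|\hat h(-\alpha)|\gg\delta^{O(1)}N$ and $|S_P(\alpha)|\gg\delta^{O(1)}N^{1/2}$, and then Weyl's inequality (Lemma~\ref{weyl-ineq}) gives the required Diophantine approximation $\norm{q\alpha}\ll\delta^{-O(1)}/N$. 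No window-by-window description of $h$ is ever produced, and nothing needs to be ``globalised''; the global structure of $h$ comes out directly of a single application of Weyl once $\phi$ has been excised. Your Steps~2 and~3 are aimed at repairing a local description of $h$ that the available tools do not in fact provide, and even if such a description were available, the patching argument you outline (coherence of the frequencies $\beta_I$ across windows via quadratic exponential sums, then phase alignment of the correlations) is only gestured at rather than established.
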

In the nomenclature of \cite{TaoObstructions}, the major arc linear phases are the only obstructions to uniformity for the nonlinear Roth configuration. We emphasise that Theorem \ref{global control} is not used in the proof of Theorem \ref{main config}.

The major arc Fourier coefficients of a subset of $\{1,\dots,N\}$ essentially measure its distribution in arithmetic progressions of common difference $\ll 1$ and length $\gg N$. To illustrate this, the following definition is useful.

\begin{definition}[Local function]\label{local function def}
We call a function $\phi : \Z \to \C$ a \emph{local function of resolution $M$ and modulus $q$} if there exists a partition of $\Z$ into intervals of length $M$ such that $\phi$ is constant on the intersection of every such interval with every congruence class mod $q$.
\end{definition}

\begin{corollary}[Local control of the nonlinear term]\label{global control cor}
Let $\delta > 0$ and $f, g, h : \Z \to \C$ be 1-bounded functions with support in $\set{1, \dots, N}$.  Suppose that  
\[
\abs{\sum_{x\in\Z}\sum_{ y\in \N} f(x) g(x+y)h(x+y^2) }\geq \delta N^{3/2}.
\]
Then either $N \ll \delta^{-O(1)}$, or there is a 1-bounded local function $\phi $ of resolution $M \gg \delta^{O(1)} N$ and modulus $q \ll \delta^{-O(1)}$ such that
$$
\abs{\sum_{x\in \Z} h(x)\phi(x) } \gg \delta^{O(1)} N.
$$
\end{corollary}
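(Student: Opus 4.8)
The plan is to start from the conclusion of Theorem~\ref{global control}: after disposing of the case $N \ll \delta^{-O(1)}$, we obtain a frequency $\alpha \in \R$ and a positive integer $q \ll \delta^{-O(1)}$ with $\norm{q\alpha} \ll \delta^{-O(1)}/N$ and $\bigabs{\sum_x h(x) e(\alpha x)} \gg \delta^{O(1)} N$. The goal is to replace the linear phase $e(\alpha x)$ by a 1-bounded local function $\phi$ of modulus $q$ and resolution $M \gg \delta^{O(1)} N$, at the cost of only polynomial-in-$\delta$ losses. The key point is that the hypotheses on $\alpha$ say precisely that $\alpha$ is close to a rational $a/q$ with small denominator, so $e(\alpha x)$ is slowly varying along each congruence class mod $q$, and can therefore be approximated by a function that is genuinely constant on blocks of length $M$ within each such class.

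First I would write $\beta := \alpha - a/q$ for a suitable integer $a$ with $\norm{q\alpha} = \abs{q\beta}$, so that $\abs{\beta} \ll \delta^{-O(1)}/(qN) \leq \delta^{-O(1)}/N$. Set $M := \lceil c\delta^{C} N \rceil$ for a sufficiently small constant $c$ and large constant $C$ to be chosen, and partition $\Z$ into consecutive intervals $I_j$ of length $M$. Define $\phi(x) := e(a x_j/q)\, e(\beta m_j)$ where, for $x$ in the block $I_j \cap (r + q\Z)$, we pick a representative point (say the one in that block closest to a fixed endpoint); more simply, since $e(ax/q)$ is already constant on $I_j \cap (r+q\Z)$ once $M$ is a multiple of $q$ — which we may arrange by replacing $M$ with the nearest multiple of $q$, still $\gg \delta^{O(1)}N$ — we only need to discretise the $e(\beta x)$ factor. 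So put $\phi(x) := e(\beta m_j) e(ax/q)$ for $x \in I_j$, where $m_j$ is the left endpoint of $I_j$. This $\phi$ is 1-bounded and, by construction, a local function of resolution $M$ and modulus $q$ in the sense of Definition~\ref{local function def}.

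Next I would estimate the error $\bigabs{\sum_x h(x)(e(\alpha x) - \phi(x))}$. For $x \in I_j$ we have $e(\alpha x) - \phi(x) = e(ax/q)\bigl(e(\beta x) - e(\beta m_j)\bigr)$, and $\abs{e(\beta x) - e(\beta m_j)} \leq 2\pi \abs{\beta}\abs{x - m_j} \leq 2\pi \abs{\beta} M \ll \delta^{-O(1)} M / N \ll \delta^{-O(1)} \cdot c\delta^{C}$. Choosing $C$ large enough relative to the implied $O(1)$ and $c$ small, this is at most $\tfrac12$ of the lower bound constant; since $h$ is 1-bounded and supported on an interval of length $N$, summing over $x$ gives $\bigabs{\sum_x h(x)(e(\alpha x)-\phi(x))} \leq \tfrac12 \delta^{O(1)} N$. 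Combining with the triangle inequality and the lower bound for $\bigabs{\sum_x h(x)e(\alpha x)}$ yields $\bigabs{\sum_x h(x)\phi(x)} \gg \delta^{O(1)} N$, as required. The only mild obstacle is bookkeeping: one must make $M$ simultaneously a multiple of $q$, at least $\gg \delta^{O(1)}N$, and small enough that the oscillation of $e(\beta x)$ across a block is controlled; since $q \ll \delta^{-O(1)}$ and $\abs{\beta} \ll \delta^{-O(1)}/N$, all three constraints are compatible for an appropriate choice of exponents, and no genuine difficulty arises.
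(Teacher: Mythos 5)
Your proof is correct and follows essentially the same route as the paper: apply Theorem~\ref{global control} and then exploit the Diophantine condition $\norm{q\alpha}\ll\delta^{-O(1)}/N$ to see that $e(\alpha x)$ is nearly constant on progressions of common difference $q$ and length $M\asymp\delta^{O(1)}N$. The only (cosmetic) difference is that you build an explicit local approximation $\phi(x)=e(\beta m_j)e(ax/q)$ to $e(\alpha x)$, whereas the paper skips this intermediate object and directly takes the local function $x\mapsto\sum_P\theta_P 1_P(x)$, with $\theta_P$ the conjugate phase of $\sum_{x\in P}h(x)$, after showing $\sum_P\bigl|\sum_{x\in P}h(x)\bigr|\gg\delta^{O(1)}N$.
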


One cannot hope to prove that the functions $f$ and $g$ above also correlate globally with local functions, as the following example illustrates.  For any positive integers $x_1, x_2 \leq N^{1/2}$, set
$$
f\brac{x_1 + (x_2-1)\floor{N^{1/2}}} = \begin{cases} 
		1 & \text{ if } x_2 \equiv 0 \pmod 4,\\
		0 & \text{ if } x_2 \equiv 1 \pmod 4,\\ 
		-1 & \text{ if } x_2 \equiv 2 \pmod 4,\\
		0 & \text{ if } x_2 \equiv 3 \pmod 4; \end{cases}
$$
and set $f(x) = 0$ everywhere else.
Taking $g := f$ and $h:=1_{\{1,\dots, N\}}$, one can check that either $N \ll 1$ or
$$
\sum_{x\in \Z}\sum_{y\in \N} f(x)g(x+y) h(x+y^2) \gg N^{3/2}.
$$
However, for any arithmetic progression $P\subset\{1, \dots, N\}$, we have
$$
\abs{\sum_{x \in P} f(x)} \ll N^{1/2}.
$$
Hence, for any 1-bounded local function $\phi$ of resolution $\geq \delta N$ and modulus $\leq \delta^{-1}$, the triangle inequality gives the discorrelation 
$$
\abs{\sum_{x \in \Z } f(x) \phi(x)} \ll \delta^{-2} N^{1/2}.
$$

This example is a local obstruction coming from the real numbers: the nature of our counting operator means that we cannot disentangle possible correlations between the $f$ and $g$ functions on subintervals of length $N^{1/2}$.  We can, however, show that these are the only other possible obstructions to uniformity.

\begin{theorem}[Local control of all terms]\label{local control}
Let $\delta > 0$ and $f_1, f_2, f_3 : \Z \to \C$ be 1-bounded functions with support in $\set{1, \dots, N}$.  Suppose that  
\[
\abs{\sum_{x\in\Z}\sum_{ y\in \N} f_1(x) f_2(x+y)f_3(x+y^2) }\geq \delta N^{3/2}.
\]
Then either $N \ll \delta^{-O(1)}$, or for each $i=1,2,3$ there is a 1-bounded local function $\phi_i$ of resolution $\gg \delta^{O(1)} N^{1/2}$ and modulus $q_i \ll \delta^{-O(1)}$ such that
$$
\abs{\sum_{x\in \Z} f_i(x)\phi_i(x) } \gg \delta^{O(1)} N.
$$
\end{theorem}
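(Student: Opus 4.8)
The conclusion for $f_3$ is immediate from Corollary~\ref{global control cor} applied with $(f_1,f_2,f_3)$ in place of $(f,g,h)$: it yields a $1$-bounded local function $\phi_3$ of resolution $\gg\delta^{O(1)}N\gg\delta^{O(1)}N^{1/2}$ and modulus $q_3\ll\delta^{-O(1)}$ with $\bigabs{\sum_x f_3(x)\phi_3(x)}\gg\delta^{O(1)}N$. The content of the theorem is therefore the control of the two ``linear'' functions $f_1$ and $f_2$, and their treatments are parallel: writing $\Lambda(g_1,g_2,g_3):=\sum_{x\in\Z}\sum_{y\in\N}g_1(x)g_2(x+y)g_3(x+y^2)$, we will eliminate $f_2$ (respectively $f_1$) by a Cauchy--Schwarz, so it suffices to describe the argument for $f_1$.

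\emph{Step 1 (reduce $f_3$ to a single linear phase on a subinterval).} Using the local $U^1$-control result of \cite{PelusePrendivilleQuantitative} in its count-level form --- carrying out the passage from a correlation statement to a decomposition in one step, so as not to inflate moduli through iteration --- we may write $f_3=\phi_3+\psi_3$ with $\phi_3$ a $1$-bounded local function of resolution $\gg\delta^{O(1)}N$ and modulus $q\ll\delta^{-O(1)}$ and with $\bigabs{\Lambda(f_1,f_2,\psi_3)}\leq\tfrac12\delta N^{3/2}$; hence $\bigabs{\Lambda(f_1,f_2,\phi_3)}\gg\delta N^{3/2}$. Expanding $\phi_3$ over its $\ll\delta^{-O(1)}$ level sets and Fourier-expanding in the residue class modulo $q$, then pigeonholing, we find $\beta=a/q$ and an interval $I\subseteq\set{1,\dots,N}$ with $\abs I\gg\delta^{O(1)}N$ such that
\[
\Bigabs{\sum_{\substack{x\in\Z,\ y\in\N\\ x+y^2\in I}} f_1(x)\,f_2(x+y)\,e\bigbrac{\beta(x+y^2)}}\gg\delta^{O(1)}N^{3/2}.
\]

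\emph{Step 2 (Cauchy--Schwarz to short-range autocorrelations of $f_1$).} Absorb the linear phase into $f_1$ by setting $\widetilde{f_1}(x):=f_1(x)e(\beta x)$, substitute $z=x+y$, and apply Cauchy--Schwarz in $z$ to discard $f_2$. Expanding the square and recentring, the resulting quantity is a sum over shifts $h$ with $\abs h\leq N^{1/2}$ of
\[
e(-\beta h^2)\sum_{w\in\Z}\widetilde{f_1}(w)\,\overline{\widetilde{f_1}(w+h)}\sum_{y\in Y_{w,h}}e(2\beta hy),
\]
where $Y_{w,h}\subseteq\set{1,\dots,\floor{N^{1/2}}}$ is a union of $O(1)$ intervals (cut out by the two constraints ``$w+y^2\in I$'' and ``$w+h+h^2+y^2-2hy\in I$''). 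The key point is that the $y$-dependence of the phase is \emph{linear}, coming from the difference of squares $y^2-(y-h)^2=2hy-h^2$; so the inner sum is an incomplete geometric sum of length $\leq N^{1/2}$, of size $\ll q\ll\delta^{-O(1)}$ unless $q\mid 2h$, and the $h$ with $q\nmid 2h$ contribute at most $\ll\delta^{-O(1)}N^{5/2}$ in total, which is negligible once $N\gg\delta^{-O(1)}$. Collecting the surviving terms and pigeonholing, there is a set $\mathcal H\subseteq\bigset{h:\abs h\leq N^{1/2},\ q\mid 2h}$ of size $\abs{\mathcal H}\gg\delta^{O(1)}N^{1/2}$ on which $\bigabs{\sum_{w}\widetilde{f_1}(w)\,\overline{\widetilde{f_1}(w+h)}\,\omega_h(w)}\gg\delta^{O(1)}N$, where $\omega_h(w):=\abs{Y_{w,h}}/N^{1/2}$ are $1$-bounded real weights.

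\emph{Step 3 (local $U^2$-inverse at scale $N^{1/2}$, and the main obstacle).} Passing to the Fourier side, these weighted autocorrelations being large for a positive proportion of $h$ lying in a progression of common difference $\asymp q$ inside $[-N^{1/2},N^{1/2}]$ forces, by an $L^2$-mass argument, a frequency $\xi_0$ which is \emph{major-arc at scale $N^{1/2}$} --- there is $q'\ll\delta^{-O(1)}$ with $\norm{q'\xi_0}\ll\delta^{-O(1)}N^{-1/2}$ --- and which carries $\gg\delta^{O(1)}$ of the $L^2$-energy of $\widetilde{f_1}$ in a Fourier window of width $\asymp\delta^{-O(1)}N^{-1/2}$ about it. Since $\beta=a/q$ is exactly $q$-rational, $\xi_0-\beta$ is again major-arc at scale $N^{1/2}$ with modulus $\ll\delta^{-O(1)}$; converting the Fourier-energy statement back to physical space (convolution with the associated Fej\'er kernel, of width $\asymp\delta^{O(1)}N^{1/2}$) shows that $f_1$ correlates, to within $\gg\delta^{O(1)}N$, with the $1$-bounded local function $\phi_1$ of resolution $\gg\delta^{O(1)}N^{1/2}$ and modulus $\ll\delta^{-O(1)}$ given by $e\bigbrac{(\xi_0-\beta)x}$ times the truncated, normalised local average of $f_1(x)e\bigbrac{-(\xi_0-\beta)x}$ at scale $\delta^{O(1)}N^{1/2}$. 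The argument for $f_2$ is identical, eliminating $f_1$ by Cauchy--Schwarz in $x$. I expect two points to carry the difficulty: the count-level reduction of $f_3$ in Step~1 without modulus blow-up; and, more seriously, the local $U^2$-inverse of Step~3, specifically proving that the extracted frequency is major-arc \emph{at scale $N^{1/2}$}. The latter is not a formal consequence of having many large autocorrelations (a minor-arc linear phase has full-size autocorrelations at every short shift), and must instead be squeezed out by retaining the partial-Gauss-sum weights $\omega_h$, whose size is itself controlled by the proximity of the relevant frequency to such a rational. This is precisely the mechanism that makes resolution $N^{1/2}$ --- rather than $N$ --- unavoidable for $f_1$ and $f_2$, consistently with the example preceding the theorem.
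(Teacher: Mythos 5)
The treatment of $f_3$ is correct and matches the paper: Corollary~\ref{global control cor} gives the needed correlation at the even better resolution $\gg\delta^{O(1)}N$. But the treatment of $f_1$ and $f_2$ misses a much simpler route, and the argument you do give has genuine gaps that you yourself flag.

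The paper's proof is a one-liner because Lemma~\ref{partial inverse theorem} already \emph{is} the statement you need for $f_1$ and $f_2$. Taking $q=1$, the hypothesis $\bigabs{\sum_{x}\sum_{y\in\N}f_1(x)f_2(x+y)f_3(x+y^2)}\geq\delta N^{3/2}$ says precisely that $\abs{\Lambda_{1,N}(f_1,f_2,f_3)}\gg\delta$, and by the definition \eqref{flat q norm eq} of the partial cut norm this gives $\norm{f_1}^\flat_{1,N}\gg\delta$ and $\norm{f_2}^\flat_{1,N}\gg\delta$ simultaneously (taking $(g_1,g_2)=(f_2,f_3)$ for the first bound and $(g_1,g_2)=(f_1,f_3)$ for the second). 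Lemma~\ref{partial inverse theorem} with $q=1$ then produces, for each of $f_1$ and $f_2$, a $1$-bounded local function of resolution $\gg\delta^{O(1)}N^{1/2}$ and modulus $\ll\delta^{-O(1)}$ correlating with it at the required strength. That is the whole proof. The lemma is a repackaging of the technical heart of \cite{PelusePrendivilleQuantitative}, and the paper black-boxes it; your Steps 1--3 are, in effect, an attempt to re-derive a version of that result from scratch.

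On the specifics of your Steps 1--3, the gaps you flag are real and not cosmetic. In Step~1, Corollary~\ref{global control cor} gives only a correlation of $f_3$ with a single local function; upgrading this to a decomposition $f_3=\phi_3+\psi_3$ with $\abs{\Lambda(f_1,f_2,\psi_3)}\leq\tfrac12\delta N^{3/2}$ in one shot is not a formal consequence, and the standard route (energy increment) would indeed compound the moduli in exactly the way you say you are trying to avoid. In Step~3, the claim that one can extract a frequency that is major-arc \emph{at scale $N^{1/2}$} from the weighted autocorrelations is, as you observe, not a consequence of a generic local-$U^2$ inverse and would require a careful analysis of the Gauss-sum weights $\omega_h$; this is precisely the kind of argument that occupies the bulk of \cite{PelusePrendivilleQuantitative} and should not be rederived here. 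So the proposal is not correct as written: the reduction for $f_1$ and $f_2$ should instead go through Lemma~\ref{partial inverse theorem} directly.
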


\begin{proof}
This is an immediate consequence of Corollary \ref{global control cor} and Lemma \ref{partial inverse theorem}.
\end{proof}

\subsection{Longer polynomial progressions}\label{longer intro}
In analogy with the first author's generalisation \cite{PeluseBounds} of \cite{PelusePrendivilleQuantitative}, it is natural to ask whether the methods of this paper yield polylogarithmic bounds for sets of integers lacking longer progressions
\begin{equation}\label{longer}
x,\ x+ P_1(y), \ \dots, \ x + P_m(y),
\end{equation} 
where the $P_i \in \Z[y]$ have zero constant term and $\deg P_1 < \dots < \deg P_m$.

As was mentioned above, the key input to this paper is the local $U^1$-control result \cite[Theorem 7.1]{PelusePrendivilleQuantitative}.  Replacing this with \cite[Theorem 3.3]{PeluseBounds}, our argument generalises in a straightforward manner to yield polylogarithmic bounds for subsets of $\{1,\dots,N\}$ lacking \eqref{longer} when $m = 2$, that is, for all three-term polynomial progressions with distinct degrees and zero constant term.

Obtaining polylogarithmic bounds for longer polynomial progressions requires an additional idea. We sketch a strategy in \S\ref{longer section}, which relies on obtaining an appropriate generalisation of \cite[Theorem 3.3]{PeluseBounds}, a generalisation that would require re-running the majority of the arguments therein.

\subsection*{Acknowledgements} S.~Peluse is supported by the NSF Mathematical Sciences Postdoctoral Research Fellowship Program under Grant No.\ DMS-1903038

\subsection{An outline of our argument}\label{sec3}

Effective Szemer\'edi-type theorems are commonly proved via a density increment strategy, the prototypical example being the proof of Roth's theorem \cite{RothCertainI} on three-term arithmetic progressions. This strategy begins with a set $A \subset \{1, \dots, N\}$ of density $\delta := |A|/N$ that lacks the configuration in question. It then proceeds to show that there is a substructure $S \subset \{1, \dots, N\}$ on which $A$ has increased density $\delta + \Omega_\delta(1)$. One then hopes to iterate the argument with $A \cap S$ in place of $A$ and $S$ in place of $\{1, \dots, N\}$. 

One avenue to obtaining polylogarithmic bounds in a Szemer\'edi-type theorem is to obtain  
a constant proportion density increment $\delta + \Omega(\delta)$ on a substructure $S$ of polynomial size $|S| \approx N^{\Omega(1)}$.  This was accomplished for three-term arithmetic progressions by Heath--Brown \cite{HeathBrownInteger} and Szemer\'edi  \cite{SzemerediInteger} (in fact, they were able to handle a smaller lower bound on $|S|$).

An alternative strategy for obtaining polylogarithmic bounds is to obtain the weaker polynomial increment $\delta + \Omega(\delta^{O(1)})$, yet on a \emph{dense} or \emph{global} substructure $S$, that is, a substructure of size $|S| \geq \exp(-O(\delta^{-O(1)})) N$. This was accomplished by S\'ark\"ozy \cite{SarkozyDifferenceI} for the configuration $x, x+y^2$ and for three-term arithmetic progressions by Bourgain \cite{BourgainTriples}. 

Both of these strategies are achievable for the nonlinear Roth configuration. The global structure strategy is perhaps the most natural, and may be accomplished by utilising a generalisation of Theorem \ref{global control}. In this note we do not pursue this, and instead give details for a constant-proportion density increment, as our argument is somewhat cleaner in this form.

More specifically,  we show that if $A \subset \set{1,\dots, N}$ has density $\delta$ and lacks nontrivial configurations of the form $x, x+y, x+y^2$, then there exists an arithmetic progression $P$ of length $|P| \gg \delta^{O(1)} N^{1/2}$ and common difference $q \ll \delta^{-O(1)}$ such that we have the density increment
\begin{equation}\label{constant proportion}
\frac{|A\cap P|}{|P|} \geq (1+\Omega(1)) \frac{|A|}{N}.
\end{equation}
As outlined in \cite{PelusePrendivilleQuantitative}, the `almost bounded' size of $q$ allows us to iterate this procedure. (In \cite{PelusePrendivilleQuantitative}, we obtain the weaker density increment $(1+\Omega(\delta^{O(1)}))|A|/N$, which leads to the extra logarithm appearing in the bound there.)

We obtain the constant-proportion increment \eqref{constant proportion} by combining the local $U^1$-control result of~\cite{PelusePrendivilleQuantitative} with a strategy of Heath--Brown \cite{HeathBrownInteger} and Szemer\'edi \cite{SzemerediInteger}, which has a very robust formulation due to Green and Tao \cite{GreenTaoNewII}. To accomplish this, we first give a structural characterisation of sets lacking the nonlinear Roth configuration (this is Lemma \ref{inverse theorem}, whose essence is captured in the weaker Theorem \ref{local control}). These sets resemble the level sets of the product of a function that is constant on intervals of length $N^{1/2}$ and a function that is constant on congruence classes modulo a bounded $q$.

Having obtained such a structural characterisation, an energy increment procedure closely following \cite{GreenTaoNewII} allows us to approximate an arbitrary set of integers by these level sets, up to an error that does not contribute substantially to the count of nonlinear Roth configurations.  A combinatorial argument then allows us to deduce that our set must have a substantial density increment on one of these level sets, of the form $\delta + \Omega(\delta)$.  As a result, our density increment procedure requires only $\log(\delta^{-1})+O(1)$ iterations, compared with the $O(\delta^{-O(1)})$ required in \cite{PelusePrendivilleQuantitative}, and this yields the polylogarithmic improvement over our previous density increment iteration.

The remainder of this paper is organized as follows. We derive Theorem \ref{main} in \S\ref{increment sec} via a density increment iteration. Our deduction uses a density increment lemma that is established in \S\S\ref{inverse theorem section}--\ref{increment proof sec}. We prove Theorem \ref{global control} and Corollary \ref{global control cor} in \S\ref{global sec}.

\subsection{Notation}\label{notation}
\subsubsection{Standard conventions}
We use $\N$ to denote the positive integers.  For a real number $X \geq 1$, write $[X] = \{ 1,2, \ldots, \floor{X}\}$.  A complex-valued function is said to be \emph{1-bounded} if the modulus of the function does not exceed 1. 

We use counting measure on $\Z$, so that for $f,g :\Z \to \C$, we have
$$
\norm{f}_{\ell^p} := \biggbrac{\sum_x |f(x)|^p}^{\recip{p}}, \ \ang{f,g} := \sum_x f(x)\overline{g(x)},\ \text{and}\ (f*g)(x) = \sum_y f(y)g(x-y).
$$ 
Any sum of the form $\sum_x$ is to be interpreted as a sum over $\Z$. The \emph{support} of $f$ is the set $\supp(f) := \set{x \in \Z : f(x) \neq 0}$. We write $\norm{f}_\infty$ for $\sup_{x \in \Z} |f(x)|$.
 
We use Haar probability measure on $\T := \R/\Z$, so that for measurable $F : \T \to \C$, we have
$$
\norm{F}_{L^p} := \biggbrac{\int_\T |F(\alpha)|^pd\alpha}^{\recip{p}} = \biggbrac{\int_0^1 |F(\alpha)|^pd\alpha}^{\recip{p}}.
$$
We write $\norm{\alpha}_\T$ for the distance from $\alpha \in \R$ to the nearest integer
$
\min_{n \in \Z} |\alpha - n|.
$
This remains well-defined on $\T$.

We define the Fourier transform of $f : \Z \to \C$ by 
\begin{equation}\label{Fourier transform}
\hat{f}(\alpha) := \sum_x f(x) e(\alpha x) \qquad (\alpha \in \T),
\end{equation}
when this makes sense.  Here $e(\alpha)$ stands for $e^{2\pi i \alpha}$.

For a finite set $S$ and function $f:S\to\C$, denote the average of $f$ over $S$ by
\[
\E_{s\in S}f(s):=\frac{1}{|S|}\sum_{s\in S}f(s).
\]

%
%

For a complex-valued function $f$ and positive-valued function $g$, write $f \ll g$ or $f = O(g)$ if there exists a constant $C$ such that $|f(x)| \le C g(x)$ for all $x$. We write $f = \Omega(g)$ if $f \gg g$. We subscript this notation when the implicit constant may depend on the subscripted parameters. 

\subsubsection{Local conventions}

Up to normalisation, all of the above are widely used in the literature. Next, we list notation specific to our paper. We have tried to minimise this in order to aid the casual reader.  

The quantity $(N/q)^{1/2}$ appears repeatedly, where $N$ and $q$ are integers fixed throughout the majority of our paper. We therefore adopt the convention that
\begin{equation}\label{M def}
M:= \floor{\sqrt{N/q}}.
\end{equation}
Assuming this, define the \emph{counting operator} on the functions $f_i : \Z \to \C$  by
\begin{equation}\label{counting op}
\Lambda_{q, N}(f_0, f_1, f_2) := \E_{x \in [N]} \E_{y \in [M]} f_0(x)f_1(x+y) f_2(x+qy^2).
\end{equation}
When $f_0=f_1=f_2=f$, we simply write $\Lambda_{q, N}(f)$ for $\Lambda_{q,N}(f_0,f_1,f_2)$.

For a real parameter $H \geq 1$, we use $\mu_H : \Z \to [0,1]$ to represent the following normalised Fej\'er kernel
\begin{equation}\label{fejer}
\mu_H(h) := \recip{\floor{H}} \brac{1 - \frac{|h|}{\floor{H}}}_+ = \frac{(1_{[H]} * 1_{-[H]} )(h)}{\floor{H}^2}.
\end{equation}
This is a probability measure on $\Z$ with support in the interval $(-H, H)$.

\section{Iterating the density increment}\label{increment sec}
In this section we prove Theorem \ref{main} using the following lemma, which we will devote \S\S\ref{inverse theorem section}--\ref{increment proof sec} to proving.
\begin{lemma}[Density increment lemma]\label{increment lemma}
Let $q \leq N$ be positive integers and $\delta > 0$.  Suppose that $A \subset [N]$ satisfies $|A| \geq \delta N$ and lacks the configuration 
\begin{equation}\label{q config}
x,\ x+y, \ x+qy^2 \qquad (y \neq 0).
\end{equation} 
Then either  $N \ll (q/\delta)^{O(1)}$ or there exists $q' \leq \exp\brac{O\brac{\delta^{-O(1)}}}$ and $N' \geq q^{-O(1)}\exp\brac{-O\brac{\delta^{-O(1)}}}N^{1/2} $
 such that, for some $a \in \Z$, we have
\begin{equation}\label{increment}
 |A \cap (a + qq'\cdot[N'])| \geq (1+\Omega(1))\delta N'.
\end{equation}
\end{lemma}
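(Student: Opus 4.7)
The plan is to execute the Heath-Brown--Szemer\'edi density increment strategy in the robust formulation of Green and Tao~\cite{GreenTaoNewII}, using the local $U^1$-control theorem of~\cite{PelusePrendivilleQuantitative} as the key analytic input. To begin, I would observe that the absence of nontrivial configurations~\eqref{q config} forces $\Lambda_{q,N}(1_A) \ll 1/M$, which is much smaller than the balanced count $\Lambda_{q,N}(\delta 1_{[N]}) \gg \delta^3$ once $N \gg (q/\delta)^{O(1)}$. Writing $f := 1_A - \delta 1_{[N]}$, a telescoping multilinear expansion yields
\[
\Lambda_{q,N}(1_A) - \Lambda_{q,N}(\delta 1_{[N]}) = \Lambda_{q,N}(f,1_A,1_A) + \Lambda_{q,N}(\delta 1_{[N]},f,1_A) + \Lambda_{q,N}(\delta 1_{[N]},\delta 1_{[N]},f),
\]
so at least one of the three terms on the right has magnitude $\gg \delta^3$.

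Next, I would prove and apply an inverse theorem (Lemma~\ref{inverse theorem}) asserting that such a multilinear correlation forces one of the 1-bounded inputs to correlate, up to polynomial losses in $\delta$, with a 1-bounded local function of resolution $\gg q^{-O(1)}\delta^{O(1)} N^{1/2}$ and modulus $\leq \delta^{-O(1)}$. The proof takes the $U^1$-control of~\cite{PelusePrendivilleQuantitative} as a black box at the intermediate scale $M$, and propagates structural control up to the coarse scale $N^{1/2}$ via a chain of Cauchy--Schwarz steps and changes of variable, tracking polynomial dependence on $\delta$ and $q$ throughout.

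I would then run a Green--Tao-style energy increment on a partition $\mathcal{P}$ of $[N]$. Starting from the trivial partition and the trivial approximation $\phi \equiv \delta$, I iteratively refine: whenever the inverse theorem yields a nontrivial local correlation involving $1_A - \phi$, intersect every cell of $\mathcal{P}$ with the level sets of the furnished local function and update $\phi := \E[1_A \mid \mathcal{P}]$. A standard $\ell^2$ computation shows that each refinement step increases $\|\phi\|_{\ell^2}^2$ by $\gg \delta^{O(1)} N$, so the procedure terminates in $O(\delta^{-O(1)})$ iterations. The resulting partition consists of arithmetic progressions of common difference $qq'$, with $q' \leq \exp(O(\delta^{-O(1)}))$, and length $\gg q^{-O(1)} \exp(-O(\delta^{-O(1)})) N^{1/2}$.

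At termination, the inverse theorem fails to apply to $1_A - \phi$, so a second telescoping expansion gives $|\Lambda_{q,N}(\phi) - \Lambda_{q,N}(1_A)| = o(\delta^3)$, whence $\Lambda_{q,N}(\phi) = o(\delta^3)$. Since $\phi$ is $[0,1]$-valued with mean $\delta$ and piecewise constant on the long APs in $\mathcal{P}$, a combinatorial/convexity argument on the level sets of $\phi$ produces a cell $P$ with $\phi|_P \geq (1+c)\delta$ for an absolute constant $c > 0$; were $\phi \leq (1+c)\delta$ everywhere, the $L^\infty$ bound together with $\E \phi = \delta$ and the large-scale constancy would force $\Lambda_{q,N}(\phi) \gg \delta^3$, a contradiction. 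That cell realises~\eqref{increment}. The main obstacle will be the inverse theorem: lifting the $U^1$-control from its native scale $M$ to the global scale $N^{1/2}$, and from a multilinear to a single-function correlation, while losing only polynomially in $\delta$, demands a delicate Cauchy--Schwarz tower that accounts for the technical bulk of \S\S\ref{inverse theorem section}--\ref{increment proof sec}.
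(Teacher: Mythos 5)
Your overall route coincides with the paper's: telescope against $\delta 1_{[N]}$, feed the resulting cut-norm largeness into an inverse theorem built on the local $U^1$-control of \cite{PelusePrendivilleQuantitative}, run a Green--Tao energy increment to approximate $1_A$ by a projection $\phi=\Pi_{\mathcal{B}}1_A$ onto a local factor, and extract a constant-proportion increment $(1+c)\delta$ on a cell. The genuine gap is in your final step. You assert that the terminal partition ``consists of arithmetic progressions of common difference $qq'$ \dots\ and length $\gg q^{-O(1)}\exp(-O(\delta^{-O(1)}))N^{1/2}$,'' and you then conclude by exhibiting a \emph{single} cell on which $\phi\geq(1+c)\delta$. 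But the atoms of a join of simple local factors are not all long: overlaying $d$ interval partitions of resolutions $M_i\geq M$ produces intervals that can be arbitrarily short (only their \emph{number} is controlled, as in Lemma \ref{atom bound}), and intersecting with a congruence class mod $qq'$ can shrink them further, even to single points. So the cell produced by your convexity/contradiction argument carries no lower bound on its length, and \eqref{increment} with the required $N'\geq q^{-O(1)}\exp(-O(\delta^{-O(1)}))N^{1/2}$ does not follow; that length bound is exactly what the iteration in Theorem \ref{main} consumes, so it cannot be dropped.

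To close the gap you need a lower bound on the \emph{measure} of $S:=\{x\in[N]:\Pi_{\mathcal{B}}1_A(x)\geq(1+c)\delta\}$, not merely its nonemptiness, followed by pigeonhole over the at most $qq'd(N/M+2)$ atoms (Lemma \ref{atom bound}); since $S$ is $\mathcal{B}$-measurable it is a union of atoms, so the largest atom inside $S$ has size at least $|S|/|\mathcal{B}|\gg q^{-O(1)}\exp(-O(\delta^{-O(1)}))N^{1/2}$. The paper obtains $|S|\gg\delta^3N$ by comparing $\Lambda_{q,N}(\Pi_{\mathcal{B}}1_A)$ with $\Lambda_{q,N}(\delta 1_{[N]})$ via the $\ell^1$-control of Lemma \ref{L1 control} and the mean-zero/positive-part trick. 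Alternatively, your union-bound positivity claim can be upgraded to do the job: if $|S|\leq c\delta N$, then off the set $S\cup\{\phi<(1-\theta)\delta\}$, which has density $O(c)$ in $[N]$ for suitable absolute $\theta$, one has $\phi\geq(1-\theta)\delta$ pointwise, whence $\Lambda_{q,N}(\phi)\gg\delta^3$, contradicting the regularity threshold; note that the ``large-scale constancy'' of $\phi$ you invoke plays no role in this bound, whereas the atom count does in the subsequent pigeonhole. (Two smaller imprecisions: the paper's cut-norm inverse theorem yields correlation of the balanced function itself with a \emph{product of two} local functions of moduli $qq_1,qq_2$, not correlation of ``one of the inputs'' with a single local function, and the moduli must be multiples of $q$ for the increment progression to have difference $qq'$; your energy-increment step should be phrased accordingly, though this is cosmetic rather than structural.)
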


\begin{proof}[Proof of Theorem \ref{main} given Lemma \ref{increment lemma}]
This is the same as the proof of~\cite[Theorem 1.1]{PelusePrendivilleQuantitative}, but using the improved density increment lemma above in place of the density increment lemma of~\cite{PelusePrendivilleQuantitative}. Note first that if $A$ lacks the configuration \eqref{q config}, then the set
\[
\{x :a+qq'x\in A\},
\]
lacks configurations of the form
\[
x,\ x+y,\ x+q^2q'y^2 \qquad (y \neq 0).
\]

Let $A \subset [N]$ have size $\delta N$, and suppose that it has no non-linear Roth configurations \eqref{main config}.  Setting $A_0 := A$, $N_0 := N$ and $q_0 = 1$, let us suppose we have a  sequence of tuples $(A_i, N_i, q_i)$ for $i = 0, 1, \dots, n$ that each satisfy the following:
\begin{enumerate}[(i)]
\item  $A_i$ lacks configurations of the form 
$$
x,\ x+y,\ x+q_0^{2^i} q_1^{2^{i-1}}\dotsm q_{i-1}^2 q_i y^2 \qquad (y \neq 0).
$$
\item\label{qi upper bound}  $q_i \leq \exp\brac{O\brac{\delta^{-O(1)}}}$;
\item  $A_i \subset [N_i]$ and for $i \geq 1$ we have 
$$
\frac{|A_i|}{N_i} \geq (1+c)\frac{|A_{i-1}|}{N_{i-1}},
$$
where $c = \Omega(1)$ is a positive absolute constant;
\item\label{length lower bound}  for $i \geq 1$ we have the lower bound 
$$
N_i \geq \frac{N_{i-1}^{1/2}}{\brac{q_0^{2^{i-1}}\dotsm q_{i-1}\exp\brac{\delta^{-O(1)}}}^{O(1)}}.
$$
\end{enumerate}  

Applying Lemma \ref{increment lemma} with $q = q_0^{2^i} q_1^{2^{i-1}}\dotsm q_{i-1}^2 q_i$, either 
\begin{equation}\label{termination condition}
N_n \ll \brac{q_0^{2^n}q_1^{2^{n-1}}\dotsm q_{n-1}^2 q_n/\delta}^{O(1)}, 
\end{equation}
or we may obtain $(A_{n+1}, N_{n+1}, q_{n+1})$ satisfying conditions (i)--(iv). If \eqref{termination condition} holds, then our iterative process terminates at stage $n$.  

If the number of iterations $n$ is at least $c^{-1}$, then the density of $A_n$ on $[N_n]$ is at least $2\delta$.  After an additional $\trecip{2}c^{-1}$ iterations, the density is at least $4\delta$.  Hence if the number of iterations is at least
$$
\ceil{c^{-1}} + \ceil{\trecip{2}c^{-1}} + \ceil{\trecip{4}c^{-1}}+ \dots + \ceil{\trecip{2^{m-1}}c^{-1}},
$$ 
then the density is at least $2^m\delta$.  The density therefore exceeds one if the number of iterations exceeds $2c^{-1} + \log_2(\delta^{-1})$.  Since this cannot happen, it follows that there exists $n \leq \log_2(\delta^{-1})+O(1)$ such that the procedure terminates at stage $n$.

At the point of termination, the  smallness assumption \eqref{termination condition} must hold, so that
\begin{equation*}\label{N_i upper bound}
N_n \leq  \exp\brac{O\Bigbrac{\delta^{-O(1)}}}.
\end{equation*}
On the other hand, iteratively applying the lower bound \eqref{length lower bound}, we have 
\begin{equation*}\label{N_i lower bound}
\begin{split}
N_n  & \geq \frac{N_{n-1}^{1/2}}{\brac{q_0^{2^{n-1}}\dotsm q_{n-1}\exp\brac{\delta^{-O(1)}}}^{O(1)}}\\
&  \geq N^{1/2^n}\sqbrac{q_0^{2^{n-1}}\dotsm q_{n-1}\exp\brac{\delta^{-O(1)}}}^{-O(1 + \recip{2} + \recip{4} + \dots + 2^{1-n}) } \\
& \gg \exp\brac{-O\brac{\delta^{-O(1)}}} N^{\Omega(\delta)},
\end{split}
\end{equation*}
where we use the upper bound \eqref{qi upper bound} on the $q_i$'s, together with $n \leq \log_2(\delta^{-1})+O(1)$. Taking a logarithm and comparing upper and lower bounds for $N_n$ gives
$
\log N \ll \delta^{-O(1)},
$
which yields the bound claimed in Theorem \ref{main}.
\end{proof}

\section{The cut norm inverse theorem}\label{inverse theorem section}

The first step of the proof of Lemma~\ref{increment lemma} is to use the main technical result of~\cite{PelusePrendivilleQuantitative} to prove an inverse theorem for the cut norm associated to $\Lambda_{q,N}$, which we now define.

\begin{definition}[Cut norm]
For positive integers $q \leq N$, we define the \emph{cut norm} of  $f : \Z \to \C$ by
\begin{equation}\label{q norm eq}
\norm{f}_{q, N} := \sup\{|\Lambda_{q, N}(f, g_1,g_2)|,\ |\Lambda_{q, N}(g_1, f,g_2)|,\ |\Lambda_{q, N}(g_1,g_2, f)|\},
\end{equation}
where the supremum is taken over all 1-bounded functions $g_i : [N] \to \C$.  We note that, in spite of our nomenclature, this is not a norm, but a seminorm.  One could remedy this by summing over $y \geq 0$ in the counting operator \eqref{counting op}.

Initially, the cut norm is too restrictive for us, so we begin by working with the weaker quantity
\begin{equation}\label{flat q norm eq}
\norm{f}^\flat_{q,N} := \sup\{|\Lambda_{q, N}(f, g_1,g_2)|, |\Lambda_{q, N}(g_1, f,g_2)| :  |g_i| \leq 1 \text{ and } \supp(g_i) \subset [N] \},
\end{equation}
which we refer to as the \emph{partial cut norm}.
\end{definition}

The following lemma is simply a rephrasing of \cite[Theorem 7.1]{PelusePrendivilleQuantitative}, which is the technical heart of that paper. See Definition \ref{local function def} for the meaning of `local function'.

\begin{lemma}[Partial cut norm inverse theorem]\label{partial inverse theorem}
Let $q \leq N$ be positive integers, $\delta>0$, and $f:\Z\to\C$ be a $1$-bounded function with support in $[N]$.  
Suppose that  
\[
\norm{f}^\flat_{q,N}\geq\delta .
\]
Then either $N \ll (q/\delta)^{O(1)}$ or there exists a 1-bounded local function $\phi$ of resolution  $\gg (\delta/q)^{O(1)}N^{1/2}$, modulus $qq'$ for some $q'\ll  \delta^{-O(1)}$, and  such that
$$
\sum_{x\in[N]} f(x)\phi(x) \gg \delta^{O(1)} N.
$$
\end{lemma}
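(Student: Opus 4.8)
The plan is to deduce the lemma directly from the local $U^1$-control theorem \cite[Theorem 7.1]{PelusePrendivilleQuantitative}; since the lemma is advertised as a rephrasing, the substance is already contained there, and what remains is to fix a dictionary between the two formulations and to check that both linear slots appearing in the partial cut norm are covered. First I would write out \cite[Theorem 7.1]{PelusePrendivilleQuantitative} and reconcile conventions: the weighted configuration $x,\ x+y,\ x+qy^2$ built into $\Lambda_{q,N}$, the coupled ranges $x\in[N]$ and $y\in[M]$ with $M=\floor{\sqrt{N/q}}$, and the fact that $\Lambda_{q,N}$ is an average, so that $\abs{\Lambda_{q,N}(\cdots)}\geq\delta$ is a count $\geq\delta NM$. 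The conclusion of \cite[Theorem 7.1]{PelusePrendivilleQuantitative} is then already in the desired form --- a $1$-bounded local function $\phi$ of resolution $\gg(\delta/q)^{O(1)}N^{1/2}$ and modulus $qq'$ with $q'\ll\delta^{-O(1)}$ for which $\sum_{x\in[N]}f(x)\phi(x)\gg\delta^{O(1)}N$ --- and it carries the same exceptional case $N\ll(q/\delta)^{O(1)}$. One may drop the modulus signs on the final correlation after rotating $\phi$ by a unimodular constant.

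Next I would unwind the hypothesis $\norm{f}^\flat_{q,N}\geq\delta$: it furnishes $1$-bounded $g_1,g_2$ supported on $[N]$ with either $\abs{\Lambda_{q,N}(f,g_1,g_2)}\geq\delta$ or $\abs{\Lambda_{q,N}(g_1,f,g_2)}\geq\delta$, that is, $f$ lies in the $x$-slot or in the $x+y$-slot. In the first case \cite[Theorem 7.1]{PelusePrendivilleQuantitative} applies verbatim. For the second case I would invoke the form of \cite[Theorem 7.1]{PelusePrendivilleQuantitative} controlling the function in the $x+y$-slot --- the argument of \cite{PelusePrendivilleQuantitative} does not distinguish between the two low-degree positions --- or, starting from a single-slot statement, perform the shift $x\mapsto x-y$ (which changes the normalisation by only a bounded factor, since $y$ runs up to $M\ll N^{1/2}$ while $\supp f\subset[N]$, so no boundary terms of size comparable to $\delta$ appear) to move $f$ into the leading slot and then rerun the relevant part of \cite{PelusePrendivilleQuantitative} allowing the two auxiliary weights to depend on both $x$ and $y$. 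In either case we recover the same local-function conclusion for $f$, and combining the two cases proves the lemma.

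The one point requiring genuine verification --- everything else being translation --- is precisely that \cite[Theorem 7.1]{PelusePrendivilleQuantitative} (or the machinery underlying it) really does control a function placed in the $x+y$-slot, not only in the $x$-slot. Should the statement there be phrased only for the $x$-slot, the remedy is a single application of the Cauchy--Schwarz inequality to symmetrise the $x$- and $x+y$-slots before quoting it; this worsens the polynomial exponents in $\delta$ but is immaterial to the qualitative claim, the precise exponent numerology contributing to $c=2^{-150}$ being tracked separately. I do not expect any further difficulty.
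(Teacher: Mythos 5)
Your plan is broadly right in spirit---the lemma is indeed a rephrasing of \cite[Theorem 7.1]{PelusePrendivilleQuantitative}, and you correctly flag that the hypothesis covers two slots so one must check that the cited theorem (or its argument) handles both---but you have misjudged what the ``rephrasing'' actually consists of, and this hides the only real work in the proof.

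The conclusion of \cite[Theorem 7.1]{PelusePrendivilleQuantitative} is \emph{not} already correlation with a local function. It is a local $U^1$-type $\ell^1$ statement: there exist $q' \ll \delta^{-O(1)}$ and $N' \gg (\delta/q)^{O(1)}N^{1/2}$ such that
\[
\sum_{x}\Bigabs{\sum_{y\in[N']}f(x+qq'y)}\gg \delta^{O(1)}NN'.
\]
Converting this into $\sum_{x\in[N]}f(x)\phi(x) \gg \delta^{O(1)}N$ for a genuine $1$-bounded local function $\phi$ of the right resolution and modulus is the substance of the paper's proof. One writes $Q := qq'$, partitions $\Z$ into arithmetic progressions of common difference $Q$ and length $N'$, uses the above to find a good shift $z$, and then observes that after choosing conjugate phases $\psi_z(u,x)$ for each residue $u \in [Q]$ and each block $x$, the map $Qz + QN'x + u + Qy \mapsto \psi_z(u,x)$ is a local function of resolution $QN'$ and modulus $Q$. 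Your proposal treats this step as a trivial dictionary lookup (``already in the desired form'') when it is in fact the construction of $\phi$; without it you have not produced the object the lemma asserts.

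A smaller point: you have the slots the wrong way round. In the paper, the direct application of \cite[Theorem 7.1]{PelusePrendivilleQuantitative} is to the case $|\Lambda_{q,N}(g_1,f,g_2)|\geq\delta$ (i.e.\ $f$ in the $x+y$ position), and it is the case $|\Lambda_{q,N}(f,g_1,g_2)|\geq\delta$ (i.e.\ $f$ in the $x$ position) that requires one to check that the same argument goes through (the paper defers this to \cite{PrendivilleInverse}). This does not affect the overall shape of the argument, since you correctly note both cases must be addressed, but your proposed fix via a change of variables $x\mapsto x-y$ or ``Cauchy--Schwarz to symmetrise'' is not what is done and would need to be made precise; the paper instead relies on the fact that the proof of \cite[Theorem 7.1]{PelusePrendivilleQuantitative} itself already treats the two low-complexity slots symmetrically.
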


\begin{proof}
By compactness, there exist 1-bounded functions $g_1, g_2 : [N] \to \C$ 
such that either
$
|\Lambda_{q, N}(f, g_1, g_2)| \geq \delta
$ 
or
$
|\Lambda_{q, N}(g_1,f, g_2)| \geq \delta.
$ 
In the latter case, we may apply \cite[Theorem 7.1]{PelusePrendivilleQuantitative} to deduce that there exist positive integers $q'\ll\delta^{-O(1)}$ and $N' \gg (\delta/q)^{O(1)}N^{1/2}$ such that
\[
\sum_{x}\abs{\sum_{y\in[N']}f(x+qq'y)}\gg \delta^{O(1)}NN'.
\]
In the former case, the reader may check that the argument of \cite[Theorem 7.1]{PelusePrendivilleQuantitative} delivers the same conclusion\footnote{For details see the second author's exposition \cite{PrendivilleInverse}.}.

To ease notation, write $Q := qq'$. Partitioning the integers into arithmetic progressions of length $N'$ and common difference $Q$ gives
\begin{multline*}
\delta^{O(1)} NN'  \ll \sum_{z \in [N']}\sum_{u\in [Q]} \sum_{x\in\Z} \abs{\sum_{y \in [N']} f( Qz +QN'x +u + Qy)}\\  \leq N'\max_z\sum_{u\in [Q]} \sum_{x\in \Z} \abs{\sum_{y \in [N']} f(Qz + QN'x +u  +Qy)}.
\end{multline*}
Defining $ \psi_z(u,x)$ to be the conjugate phase of the inner sum, we deduce the existence of $z$ for which
\begin{align*}
\delta^{O(1)} N  \ll \sum_{u \in [Q]}\sum_x\sum_{y \in [N']}f(Qz + QN'x +u  +Qy)\psi_z(u,x).
\end{align*}
The result follows on noting that every integer has a unique representation of the form $QN'x +u  +Qy$ with $u \in [Q]$, $x \in \Z$ and $y \in [N']$.  Hence the map $$Qz + QN'x +u  +Qy \mapsto \psi_z(u,x)$$ is a local function of resolution $QN'$ and modulus $Q$.
\end{proof}

Now we can prove an inverse theorem for the cut norm itself.

\begin{lemma}[Full cut norm inverse theorem]\label{inverse theorem}
Let $q \leq N$ be positive integers, $\delta>0$, and $f:\Z\to\C$ be a $1$-bounded function with support in $[N]$.  
Suppose that  
\[
\norm{f}_{q,N}\geq\delta .
\]
Then either $N \ll (q/\delta)^{O(1)}$ or there exist 1-bounded local functions $\phi_1$ and $\phi_2$,  of resolution  $\gg (\delta/q)^{O(1)}N^{1/2}$ and moduli $qq_1$ and $qq_2$, respectively, for some $q_1,q_2\ll  \delta^{-O(1)}$ such that 
\begin{equation}\label{cut norm density increment}
\abs{\sum_{x\in[N]} f(x)\phi_1(x)\phi_2(x)} \gg \delta^{O(1)} N.
\end{equation}
\end{lemma}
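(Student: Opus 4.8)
The plan is to reduce the full cut norm to the partial cut norm already handled in Lemma \ref{partial inverse theorem}. The only case in the definition \eqref{q norm eq} not covered by \eqref{flat q norm eq} is when $|\Lambda_{q,N}(g_1,g_2,f)| \geq \delta$ for some $1$-bounded $g_1, g_2$ supported in $[N]$; that is, the extremal position is the \emph{third} (nonlinear) slot rather than the first or second. So the bulk of the work is to show: if $|\Lambda_{q,N}(g_1,g_2,f)|\geq \delta$, then $f$ correlates with a product of two local functions of the stated resolution and moduli. First I would split on which case of the supremum is attained: if it is the first or second slot, Lemma \ref{partial inverse theorem} immediately gives a single local function $\phi_1$ of resolution $\gg(\delta/q)^{O(1)}N^{1/2}$ and modulus $qq_1$, and we may simply take $\phi_2 \equiv 1$ (a local function of any resolution and modulus $q$), so \eqref{cut norm density increment} holds. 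The substantive case is the third slot.

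For the third-slot case, the idea is a standard ``dualise and Cauchy--Schwarz'' manoeuvre to move $f$ out of the nonlinear position. Writing out $\Lambda_{q,N}(g_1,g_2,f) = \E_{x\in[N]}\E_{y\in[M]} g_1(x)g_2(x+y)f(x+qy^2)$, substitute $t = x+qy^2$ so that the sum over $x$ of $f(t)$ times a weight is visible; then by the triangle inequality and pigeonhole there is a $1$-bounded function picking out the phase of $f$, and one is led to bound $\E \, g_1(x)g_2(x+y)\overline{g_1(x')}\overline{g_2(x'+y)}$-type expressions after expanding the square. More concretely, I would: (1) fix the phase of $f$ by a dual function $c$ with $|c|\le 1$ so that $\E_{x,y} g_1(x)g_2(x+y)|f(x+qy^2)| \cdot (\text{something}) \geq \delta$, or more cleanly, keep $f$ and Cauchy--Schwarz in the variable $x$ (or in $y$) to obtain $|\Lambda_{q,N}(g_1, g_2, f)|^2 \ll \E_{y,y'}\big| \E_x f(x+qy^2)\overline{f(x+qy'^2)}\, w(x,y,y')\big|$; (2) recognise the resulting average as controlled by an expression of partial-cut-norm type in $f$, or iterate until $f$ sits in a linear slot. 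Alternatively — and this is likely the cleaner route the authors intend — one uses the fact that a bound on $\Lambda_{q,N}(g_1,g_2,f)$ together with Lemma \ref{partial inverse theorem} applied after one Cauchy--Schwarz already yields linear-slot control for $f$, because squaring $\Lambda_{q,N}(g_1,g_2,f)$ and expanding produces a count in which $f(x+qy^2)\overline{f(x+qy'^2)}$ appears, and the difference $q(y^2 - y'^2) = q(y-y')(y+y')$ can be engineered to behave linearly after a further change of variables, reducing to the partial cut norm for (a dilate/translate of) $f$.

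Granting linear-slot control for $f$, Lemma \ref{partial inverse theorem} supplies a single local function $\phi$ of resolution $\gg(\delta/q)^{O(1)}N^{1/2}$ and modulus $qq'$ with $q'\ll\delta^{-O(1)}$ such that $\sum_{x\in[N]} f(x)\phi(x)\gg\delta^{O(1)}N$; then taking $\phi_1 := \phi$ and $\phi_2\equiv 1$ finishes \eqref{cut norm density increment}. The reason the statement of Lemma \ref{inverse theorem} is phrased with a \emph{product} $\phi_1\phi_2$ of two local functions (rather than one) is presumably so that the downstream energy-increment argument of \S\ref{increment proof sec} can feed the two structured factors through uniformly; for the proof of this lemma itself, the second factor can be trivial. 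The main obstacle I anticipate is case (3): handling the nonlinear slot of $f$ requires a Cauchy--Schwarz step that temporarily breaks the clean polynomial structure, and one must be careful that the change of variables linearising $q(y^2 - y'^2)$ does not degrade the resolution below $N^{1/2}$ or inflate the modulus beyond $qq_i$ with $q_i\ll\delta^{-O(1)}$. Tracking these parameters through the Cauchy--Schwarz and the application of Lemma \ref{partial inverse theorem} — in particular keeping the modulus a \emph{bounded multiple} of $q$ rather than of $q^2$ — is the delicate bookkeeping, but it is of the same flavour as the parameter tracking already done in the proof of Lemma \ref{partial inverse theorem} above.
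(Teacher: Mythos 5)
Your reduction to the third‑slot case, and the observation that Lemma~\ref{partial inverse theorem} with $\phi_2\equiv 1$ handles the first two slots, matches the paper. But the route you sketch for the third slot has a genuine gap. You propose to apply Cauchy--Schwarz to $|\Lambda_{q,N}(g_1,g_2,f)|$, expand to produce $f(x+qy^2)\overline{f(x+qy'^2)}$, linearise $q(y^2-y'^2)=q(y-y')(y+y')$, and thereby ``iterate until $f$ sits in a linear slot'' so that Lemma~\ref{partial inverse theorem} applies to $f$ directly. This step is not carried out, and it is precisely the hard part: the counting operator $\E_{x,y}f_0(x)f_1(x+y)f_2(x+qy^2)$ is not symmetric between its slots, and there is no elementary Cauchy--Schwarz manoeuvre that moves a function from the quadratic slot into a linear slot of the \emph{same} cut norm. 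Indeed, controlling the nonlinear slot by a single local function of resolution $\gg\delta^{O(1)}N$ is exactly the content of Corollary~\ref{global control cor}, whose proof in \S\ref{global sec} requires the circle-method apparatus (the moment bound of Lemma~\ref{rog lem}, Weyl's inequality, Parseval on major arcs) rather than a change of variables; moreover that argument is only given for $q=1$, and the paper explicitly notes that Theorem~\ref{global control} is not used to prove Theorem~\ref{main}.

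The paper's actual proof keeps $f$ in the nonlinear slot throughout. From $|\Lambda_{q,N}(g,h,f)|\geq\delta$ it forms the dual function $F(x):=\E_{y\in[M]}h(x+y)f(x+qy^2)$, applies Cauchy--Schwarz in $x$ to get $\norm{F}_{q,N}^\flat\geq\delta^2$, and invokes Lemma~\ref{partial inverse theorem} to replace $g$ (the first, linear slot) by a local function $\phi_1$, giving $|\Lambda_{q,N}(\phi_1,h,f)|\gg\delta^{O(1)}$. Running the same dualisation again replaces $h$ (the second slot) by a local function $\phi_2$, giving $|\Lambda_{q,N}(\phi_1,\phi_2,f)|\gg\delta^{O(1)}$. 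Finally, after the change of variables $x\mapsto x-qy^2$, one pigeonholes in $y\in[M]$ to obtain $\bigabs{\sum_x f(x)\tilde\phi_1(x)\tilde\phi_2(x)}\gg\delta^{O(1)}N$, where $\tilde\phi_i$ are translates of $\phi_i$ and hence local functions of the same resolution and modulus. So the two structured factors in \eqref{cut norm density increment} do not come from any ``linearisation'' applied to $f$; they are the two \emph{companion} functions in the counting operator, converted one at a time into local functions and then frozen at a single value of $y$. You should replace your proposed Cauchy--Schwarz-on-$f$ step with this two-round dualisation followed by pigeonholing in $y$.
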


\begin{proof}
By the definition of the cut norm \eqref{q norm eq} and Lemma \ref{partial inverse theorem}, we may assume that there are 1-bounded functions $g, h : [N] \to \C$ such that
\begin{equation}\label{pre-CS}
|\Lambda_{q, N}(g, h, f)|  \geq\delta .
\end{equation}
Recalling that $M := \lfloor\sqrt{N/q}\rfloor$, define the dual function
$$
F(x) : = \E_{y \in [M]} h(x+y) f(x+qy^2).
$$
Re-parametrising \eqref{pre-CS} and applying the Cauchy--Schwarz inequality, we have that
\begin{equation*}
\delta^2\leq \E_{x \in [N]} F(x)^2 = \E_{x \in [N]}\E_{y \in [M]}  F(x)h(x+y) f(x+qy^2).
\end{equation*}
Recalling the definition of the partial cut norm \eqref{flat q norm eq}, we deduce that
$$
\norm{F}_{q, N}^\flat \geq \delta^2.
$$

Applying the partial cut norm inverse theorem (Lemma \ref{partial inverse theorem}), there exists a 1-bounded local function $\phi_1$ of resolution $\gg (\delta/q)^{O(1)}N^{1/2}$ and modulus $qq_1$ for some $q_1\ll  \delta^{-O(1)}$ such that 
\[
\abs{\sum_{x\in[N]} F(x)\phi_1(x)} \gg \delta^{O(1)} N.
\]
Thus
$$
|\Lambda_{q, N}(\phi_1, h, f)|  \gg \delta^{O(1)} .
$$

We now re-run our argument on $h$ instead of $f$, deducing the existence of a 1-bounded local function $\phi_2$ of resolution $\gg (\delta/q)^{O(1)}N^{1/2}$ and modulus $qq_2$ for some $q_2\ll  \delta^{-O(1)}$ such that
$$
|\Lambda_{q, N}(\phi_1, \phi_2, f)|  \gg \delta^{O(1)} .
$$
Expanding the counting operator and taking a maximum over $y\in [M]$ gives
\begin{align*}
\delta^{O(1)} NM & \ll \abs{\sum_{y \in [M]}\sum_x f(x)\phi_1(x-qy^2) \phi_2(x-qy^2 + y)}\\
&  \leq M \abs{\sum_x f(x)\tilde{\phi}_1(x) \tilde{\phi}_2(x)},
\end{align*}
where both $\tilde{\phi}_i$ are 1-bounded local functions of resolution $\gg (\delta/q)^{O(1)}N^{1/2}$ and moduli $qq_i$ for some $q_i\ll \delta^{-O(1)}$.
\end{proof}

\section{A weak regularity lemma}

Much of the material is this section is standard, and closely follows the expositions in Green \cite{GreenMontreal} and Green--Tao \cite{GreenTaoNewII}. To simplify the exposition of later arguments, while the factors in~\cite{GreenMontreal} and~\cite{GreenTaoNewII} are $\sigma$-algebras, our factors will be the set of atoms of certain $\sigma$-algebras (which can obviously be recovered by taking the $\sigma$-algebra generated by the set of atoms).

\begin{definition}[Factor]
  We define a \emph{factor} $\mathcal{B}$ of $[N]$ to be a partition of $[N]$, so that $[N] = \sqcup_{B \in \mathcal{B}} B$.   We say that a factor $\mathcal{B}'$ \emph{refines} $\mathcal{B}$ if every element of $\mathcal{B}$ is a union of elements of $\mathcal{B}'$.  The \emph{join} $\mathcal{B}_1\vee\dots\vee\mathcal{B}_d$ of factors $\mathcal{B}_1, \dots, \mathcal{B}_d$ is the factor formed by taking the $d$-fold intersections of the elements of $\mathcal{B}_1$, \dots, $\mathcal{B}_d$, that is,
  \[
\mathcal{B}_1\vee\dots\vee\mathcal{B}_d:=\{B_1\cap\dots\cap B_d:B_i\in\mathcal{B}_i\text{ for }i=1,\dots,d\}.
  \]
  
\end{definition}

\begin{definition}[Measurability, projection]
Given a factor $\mathcal{B}$, we say that a function $f : [N] \to \C$ is \emph{$\mathcal{B}$-measurable} if it is constant on the elements of $\mathcal{B}$.  

Define the \emph{projection} of any function $f : [N] \to \C$ onto $\mathcal{B}$  by
\begin{equation}\label{conditional expectation}
\Pi_{\mathcal{B}} f(x) = \E_{y \in B_x} f(y),
\end{equation}
where $B_x$ is the element of $\mathcal{B}$ that contains $x$.
Notice that $\Pi_{\mathcal{B}} f$ is $\mathcal{B}$-measurable, and is just the conditional expectation of $f$ with respect to the $\sigma$-algebra generated by the elements of $\mathcal{B}$.
\end{definition}

We record some well-known properties of the projection operator $\Pi_{\mathcal{B}}$ (that is, properties of conditional expectation) in the next lemma.

\begin{lemma}[Properties of the projection operator]\label{properties} {\ }
\begin{enumerate}[{\normalfont (i)}]
\item\label{projection part} The operator $\Pi_{\mathcal{B}}$ linearly projects onto the space of $\mathcal{B}$-measurable functions.
\item\label{adjoint part}  $\Pi_{\mathcal{B}}$ is self-adjoint with respect to the inner product
$$
\ang{f, g} := \sum_x f(x) \overline{g(x)} \qquad (f, g : [N] \to \C),
$$
so that $\ang{f, \Pi_{\mathcal{B}}g} = \ang{\Pi_{\mathcal{B}} f, g}$.
\item  If $\mathcal{B}'$ is a refinement of $\mathcal{B}$ then 
$$
\Pi_{\mathcal{B}'}\Pi_{\mathcal{B}}f  = \Pi_{\mathcal{B}}f. 
$$
\item\label{orthog part}  If $\mathcal{B}'$ refines $\mathcal{B}$ then   $\Pi_{\mathcal{B}}f $ is orthogonal to $\Pi_{\mathcal{B}'}f  - \Pi_{\mathcal{B}}f $.
\end{enumerate}
\end{lemma}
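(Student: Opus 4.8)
The plan is to verify each of the four properties directly from the definition $\Pi_{\mathcal{B}}f(x) = \E_{y \in B_x} f(y)$, where $B_x$ denotes the atom of $\mathcal{B}$ containing $x$; none of this requires an idea beyond unwinding definitions, so I would aim for brevity.

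For (i), linearity of $\Pi_{\mathcal{B}}$ is immediate since $f \mapsto \E_{y \in B} f(y)$ is linear for each fixed atom $B$. The value $\Pi_{\mathcal{B}}f(x)$ depends only on the atom containing $x$, so $\Pi_{\mathcal{B}}f$ is $\mathcal{B}$-measurable; conversely, if $f$ is $\mathcal{B}$-measurable then it is constant on $B_x$ and hence $\Pi_{\mathcal{B}}f(x) = \E_{y \in B_x} f(y) = f(x)$. Thus $\Pi_{\mathcal{B}}$ fixes the $\mathcal{B}$-measurable functions, which together with the $\mathcal{B}$-measurability of its image shows it is a linear projection onto that subspace (in particular $\Pi_{\mathcal{B}}^2 = \Pi_{\mathcal{B}}$).

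For (ii), I would split the defining sum over the atoms of $\mathcal{B}$. Writing $c_B(f) := \E_{x \in B} f(x)$ for the constant value of $\Pi_{\mathcal{B}}f$ on $B$, one has
\[
\ang{f, \Pi_{\mathcal{B}}g} = \sum_{B \in \mathcal{B}} \sum_{x \in B} f(x)\,\overline{c_B(g)} = \sum_{B \in \mathcal{B}} |B|\, c_B(f)\, \overline{c_B(g)},
\]
and directly expanding $\ang{\Pi_{\mathcal{B}}f, g}$ by the same bookkeeping yields the identical expression, proving self-adjointness. Part (iii) then follows by observing that if $\mathcal{B}'$ refines $\mathcal{B}$, every $\mathcal{B}$-measurable function is $\mathcal{B}'$-measurable, because each atom of $\mathcal{B}$ is a union of atoms of $\mathcal{B}'$; since $\Pi_{\mathcal{B}}f$ is $\mathcal{B}$-measurable, applying (i) with $\mathcal{B}'$ in place of $\mathcal{B}$ gives $\Pi_{\mathcal{B}'}\Pi_{\mathcal{B}}f = \Pi_{\mathcal{B}}f$.

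Finally, for (iv) I would combine the previous parts. Using additivity in the second slot of the inner product,
\[
\ang{\Pi_{\mathcal{B}}f,\ \Pi_{\mathcal{B}'}f - \Pi_{\mathcal{B}}f} = \ang{\Pi_{\mathcal{B}}f, \Pi_{\mathcal{B}'}f} - \ang{\Pi_{\mathcal{B}}f, \Pi_{\mathcal{B}}f},
\]
and then apply self-adjointness (ii) together with (iii) to rewrite $\ang{\Pi_{\mathcal{B}}f, \Pi_{\mathcal{B}'}f} = \ang{\Pi_{\mathcal{B}'}\Pi_{\mathcal{B}}f, f} = \ang{\Pi_{\mathcal{B}}f, f}$, and self-adjointness with $\Pi_{\mathcal{B}}^2 = \Pi_{\mathcal{B}}$ to rewrite $\ang{\Pi_{\mathcal{B}}f, \Pi_{\mathcal{B}}f} = \ang{\Pi_{\mathcal{B}}^2 f, f} = \ang{\Pi_{\mathcal{B}}f, f}$; the two terms cancel. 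Every step is a one-line manipulation, so there is no genuine obstacle here — the only point to stay vigilant about throughout is the conjugate-linearity of $\ang{\cdot,\cdot}$ in its second argument, i.e.\ keeping track of which factor carries the complex conjugate.
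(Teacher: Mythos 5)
Your proof is correct and follows essentially the same route as the paper: part (i) by inspection of the formula, part (ii) by splitting the inner product over atoms (your expression $\sum_B |B|\,c_B(f)\overline{c_B(g)}$ is the same as the paper's $\sum_B |B|^{-1}\sum_{x,y\in B}f(x)\overline{g(y)}$), part (iii) from $\mathcal{B}'$-measurability of $\Pi_{\mathcal B}f$, and part (iv) by the same chain of self-adjointness, idempotency, and the refinement identity. No substantive differences.
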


\begin{proof}
Inspecting the formula \eqref{conditional expectation} reveals that $\Pi_{\mathcal{B}}$ is linear, that $\Pi_{\mathcal{B}}f$ is constant on elements of $\mathcal{B}$, and that if $f$ itself is constant on elements of $\mathcal{B}$, then $\Pi_{\mathcal{B}}f = f$.  This establishes \eqref{projection part}.

Interchanging the order of summation gives
\begin{equation*}
\begin{split}
\ang{f, \Pi_{\mathcal{B}}g} = \sum_{B \in \mathcal{B}} |B|^{-1} \sum_{x,y \in B} f(x)\overline{g(y)} = \ang{\Pi_{\mathcal{B}}f, g }.
\end{split}
\end{equation*}
This proves that $\Pi_\mathcal{B}$ is self-adjoint.

The first refinement property follows from the fact that $\Pi_{\mathcal{B}} f$ is $\mathcal{B}'$-measurable.

We utilise self-adjointness of $\Pi_{\mathcal{B}}$ and the first refinement property to conclude that
\begin{equation*}
\begin{split}
\ang{\Pi_{\mathcal{B}} f  , \Pi_{\mathcal{B}} f- \Pi_{\mathcal{B}'} f}& = \ang{\Pi_{\mathcal{B}} f  , \Pi_{\mathcal{B}} f-  f} = \ang{f  , \Pi_{\mathcal{B}} f-  \Pi_{\mathcal{B}} f} = 0.
\end{split}
\end{equation*}

\end{proof}

Now we describe the particular type of factors that will be relevant to us.

\begin{definition}[Local factor]\label{local factor def}
A  \emph{simple real factor} of resolution $M$ is a factor of $[N]$  obtained by partitioning  $\R$ into intervals all of length $M$.

A  \emph{simple congruence factor} of modulus $q$ is the factor of $[N]$ obtained by partitioning into congruence classes mod $q$.

We say that $\mathcal{B}$ is a \emph{simple local factor} of resolution $M$ and modulus $q$ if it is the join of a simple real factor of resolution $M$ and a simple congruence  factor of modulus $q$.  Notice that $\mathcal{B}$ is a simple local factor if and only if it consists of the level sets of a local function (Definition \ref{local function def}) of resolution $M$ and modulus $q$.

A \emph{local factor} of  dimension $d$, resolution $M$ and modulus $q$ is the join of $d$ simple local factors $\mathcal{B}_i$, each of resolution $M_i$ and modulus $q_i$, where $M_i \geq M$ and $q = \lcm[q_1, \dots, q_d]$. 
\end{definition}

Local factors of large resolution and small modulus and dimension necessarily contain few sets. This fact will be useful later in the proof of Lemma~\ref{increment lemma}.

\begin{lemma}[Size of a local factor]
\label{atom bound}
If $\mathcal{B}$ is a local factor of dimension $d$, resolution $M$, and modulus $q$, then
$$
|\mathcal{B}| \leq qd\brac{\frac{N}{M} + 2}.
$$ 
\end{lemma}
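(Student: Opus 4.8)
The plan is to bound the number of atoms of a local factor by first handling a single simple local factor and then controlling how the count grows under taking joins. A simple real factor of resolution $M$ restricted to $[N]$ has at most $\lceil N/M\rceil \leq N/M + 1$ nonempty atoms, since the intervals partitioning $\R$ cover $[N]$ and each successive interval contributes at most one new atom. A simple congruence factor of modulus $q_i$ has exactly $q_i$ atoms. Hence a simple local factor $\mathcal{B}_i$ of resolution $M_i$ and modulus $q_i$ — being the join of these two — has at most $q_i(N/M_i + 1)$ nonempty atoms; in fact, since on each fixed interval the congruence classes partition it into at most $\min(q_i, M_i)$ pieces, one even gets the cleaner bound $q_i(N/M_i + 1)$, which suffices.

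Next I would control the join $\mathcal{B} = \mathcal{B}_1 \vee \dots \vee \mathcal{B}_d$. The naive bound $|\mathcal{B}| \leq \prod_i |\mathcal{B}_i|$ is far too weak; the point is to exploit that all the real factors have resolution at least $M$, so the real part of the join is still coarse. Concretely, write $\mathcal{B} = \mathcal{R} \vee \mathcal{C}$, where $\mathcal{R}$ is the join of the $d$ simple real factors (of resolutions $M_i \geq M$) and $\mathcal{C}$ is the join of the $d$ simple congruence factors (of moduli $q_i$). The congruence part $\mathcal{C}$ is a simple congruence factor of modulus $\lcm[q_1,\dots,q_d] = q$, so $|\mathcal{C}| \leq q$. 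For the real part: the join of $d$ partitions of $\R$ into intervals is again a partition of $\R$ into intervals, whose endpoints form the union of the $d$ sets of endpoints; restricted to $[N]$, the number of atoms of $\mathcal{R}$ is at most $1$ plus the number of endpoints landing in the interior of $[N]$. Each simple real factor of resolution $M_i \geq M$ contributes at most $N/M_i \leq N/M$ interior endpoints (they are $M_i$-separated), so $|\mathcal{R}| \leq 1 + dN/M \leq d(N/M + 1)$. Finally $|\mathcal{B}| = |\mathcal{R} \vee \mathcal{C}| \leq |\mathcal{R}|\,|\mathcal{C}| \leq d(N/M+1)\cdot q = qd(N/M + 1) \leq qd(N/M + 2)$, as claimed.

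The main thing to get right — and the only step that is not completely routine — is the bound on the real part $\mathcal{R}$: one must argue that intersecting $d$ interval partitions of $\R$, each with gaps $\geq M$, still yields $O(dN/M)$ atoms inside $[N]$ rather than $(N/M)^d$. The clean way to see this is the endpoint-counting argument above: atoms of $\mathcal{R} \cap [N]$ are intervals, consecutive ones are separated by an endpoint of one of the constituent factors lying in $(1, N)$, and each constituent factor has at most $\lfloor N/M_i \rfloor \leq N/M$ such endpoints in that range. Everything else — the count for a single simple factor, the $\lcm$ computation for the congruence part, and the trivial $|\mathcal{R}\vee\mathcal{C}|\leq|\mathcal{R}||\mathcal{C}|$ — follows directly from the definitions, and the slack in replacing $+1$ by $+2$ absorbs any off-by-one issues in the endpoint count.
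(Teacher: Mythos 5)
Your proof is correct and follows essentially the same route as the paper: split $\mathcal{B}$ into the join $\mathcal{R}$ of the real parts and the join $\mathcal{C}$ of the congruence parts, bound $|\mathcal{C}|$ by $q = \lcm[q_1,\dots,q_d]$, bound $|\mathcal{R}|$ by counting interval endpoints in $[N]$ (getting $O(dN/M)$ rather than $(N/M)^d$), and multiply. The one small inaccuracy is the claim that each simple real factor contributes ``at most $N/M_i$'' interior endpoints — the correct count is up to $1 + N/M_i$, which is what the paper uses — but as you observe the slack between $+1$ and $+2$ absorbs this.
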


\begin{proof}
By the definition of a local factor, 
it suffices to bound the size of the join of $d$ simple real factors, and then bound the size of the join of $d$ simple congruence factors.  The product of these two numbers gives us our final bound.

Joining $d$ congruence simple factors with moduli $q_1, \dots, q_d$ results in another congruence simple factor of modulus $q= \lcm[q_1, \dots, q_d]$.  The number of parts in such a partition is $q$.

The join of $d$ simple real factors partitions $[N]$ into intervals.  The upper endpoint of each of these intervals is either equal to $N$ or is equal to an endpoint of an interval in one of the original simple real factors.  For a simple real factor of resolution $M$, at most $1+N/M$ upper endpoints lie in $[1, N)$.  Hence the number of intervals in the join of $d$ simple real factors of resolutions $M_1$, \dots, $M_d$ is at most $2d + N(M_1^{-1} + \dots + M_d^{-1})$.\end{proof}

We now prove a weak regularity lemma for the cut norm via an energy increment argument.

\begin{lemma}[Weak regularity]\label{weak regularity}
Let $q \leq N$ be positive integers and $\delta >0$. Either $N \ll (q/\delta)^{O(1)}$, or for any function $f : [N] \to [0, 1]$ there exists a local factor $\mathcal{B}$ of dimension $d \ll \delta^{-O(1)}$, resolution $\gg (\delta/q)^{O(1)}N^{1/2}$, and modulus $qq'$ for some $q'\leq O\brac{1/ \delta}^{O(d)}$ such that
\begin{equation}\label{proj error}
\norm{f - \Pi_\mathcal{B} f}_{q, N} \leq \delta.
\end{equation}
\end{lemma}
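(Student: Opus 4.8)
The plan is to run a standard energy-increment argument, using the full cut norm inverse theorem (Lemma~\ref{inverse theorem}) as the structure-finding oracle. Define the \emph{energy} of a factor $\mathcal{B}$ to be $\mathcal{E}(\mathcal{B}) := \norm{\Pi_{\mathcal{B}} f}_{\ell^2}^2 / N$, which by 1-boundedness of $f$ lies in $[0,1]$, and which by Lemma~\ref{properties}\eqref{orthog part} is non-decreasing under refinement. I would build an increasing sequence of local factors $\mathcal{B}_0 \subset \mathcal{B}_1 \subset \dots$, starting from the trivial factor $\mathcal{B}_0 = \{[N]\}$ (dimension $0$). At stage $i$, if $\norm{f - \Pi_{\mathcal{B}_i} f}_{q,N} \leq \delta$ we stop and output $\mathcal{B} = \mathcal{B}_i$. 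Otherwise, apply Lemma~\ref{inverse theorem} to the 1-bounded (not necessarily real, but we may take real and imaginary parts, or simply note $f - \Pi_{\mathcal{B}_i}f$ is real-valued and bounded by $1$ since $f$ has values in $[0,1]$ and so does $\Pi_{\mathcal{B}_i}f$) function $f - \Pi_{\mathcal{B}_i} f$, which has cut norm $\geq \delta$ and support in $[N]$. This produces 1-bounded local functions $\phi_1^{(i)}, \phi_2^{(i)}$ of resolution $\gg (\delta/q)^{O(1)} N^{1/2}$ and moduli $qq_1^{(i)}, qq_2^{(i)}$ with $q_j^{(i)} \ll \delta^{-O(1)}$, such that
\[
\Bigabs{\sum_{x \in [N]} (f - \Pi_{\mathcal{B}_i} f)(x)\, \phi_1^{(i)}(x) \phi_2^{(i)}(x)} \gg \delta^{O(1)} N.
\]
Let $\mathcal{B}_{i+1}$ be the join of $\mathcal{B}_i$ with the two simple local factors whose atoms are the level sets of $\phi_1^{(i)}$ and $\phi_2^{(i)}$; this is again a local factor, of dimension two greater, resolution at least the minimum of the resolutions involved, and modulus the lcm of $qq_1^{(i)}, qq_2^{(i)}$ and the old modulus.

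The key point is that this refinement buys a fixed energy increment. Since $\phi_1^{(i)}\phi_2^{(i)}$ is $\mathcal{B}_{i+1}$-measurable, and since $\Pi_{\mathcal{B}_i}$ is self-adjoint (Lemma~\ref{properties}\eqref{adjoint part}) with $\Pi_{\mathcal{B}_i}(\phi_1^{(i)}\phi_2^{(i)})$ being $\mathcal{B}_i$-measurable, we have
\[
\sum_{x}(f - \Pi_{\mathcal{B}_i}f)(x)\,\phi_1^{(i)}(x)\phi_2^{(i)}(x) = \sum_x (\Pi_{\mathcal{B}_{i+1}}f - \Pi_{\mathcal{B}_i}f)(x)\,\phi_1^{(i)}(x)\phi_2^{(i)}(x),
\]
because $f - \Pi_{\mathcal{B}_i}f$ is orthogonal to every $\mathcal{B}_i$-measurable function, hence to $\Pi_{\mathcal{B}_i}(\phi_1^{(i)}\phi_2^{(i)})$, and $\Pi_{\mathcal{B}_{i+1}}f - \Pi_{\mathcal{B}_i}f$ pairs with a $\mathcal{B}_{i+1}$-measurable function the same way $f$ does. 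Cauchy--Schwarz and 1-boundedness of $\phi_1^{(i)}\phi_2^{(i)}$ then give $\norm{\Pi_{\mathcal{B}_{i+1}}f - \Pi_{\mathcal{B}_i}f}_{\ell^2} \gg \delta^{O(1)} N^{1/2}$, and by the orthogonality of $\Pi_{\mathcal{B}_{i+1}}f - \Pi_{\mathcal{B}_i}f$ with $\Pi_{\mathcal{B}_i}f$ (Lemma~\ref{properties}\eqref{orthog part}) this yields $\mathcal{E}(\mathcal{B}_{i+1}) \geq \mathcal{E}(\mathcal{B}_i) + \Omega(\delta^{O(1)})$. Since the energy is bounded by $1$, the process must halt after $d \ll \delta^{-O(1)}$ steps, and the output factor has dimension $d \ll \delta^{-O(1)}$.

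It remains to track the resolution and modulus across these $d$ steps and compare against the smallness hypothesis on $N$. At each step the new modulus is multiplied by at most $\delta^{-O(1)}$, so after $d$ steps the modulus is $qq'$ with $q' \leq O(1/\delta)^{O(d)}$, matching the claimed bound. The resolution is the minimum over all simple local factors used, each of which is $\gg (\delta/q)^{O(1)} N^{1/2}$; but note that Lemma~\ref{inverse theorem} is applied at each stage with the \emph{same} $N$ and with the cut norm parameter still $\geq \delta$ (the cut norm $\norm{\cdot}_{q,N}$, not some shrinking quantity), so the resolution lower bound $\gg (\delta/q)^{O(1)} N^{1/2}$ is uniform across all $d$ stages and survives into the final factor. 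Each invocation of Lemma~\ref{inverse theorem} also carries the alternative $N \ll (q/\delta)^{O(1)}$; collecting these into the single alternative in the statement is harmless. The main obstacle I anticipate is bookkeeping rather than conceptual: one must be careful that ``resolution $\gg (\delta/q)^{O(1)}N^{1/2}$'' refers to the original $N$ throughout (the inverse theorem is never re-applied on a sub-progression here, unlike in the density increment iteration), and that the local function produced by the inverse theorem has level sets forming a genuine simple local factor in the sense of Definition~\ref{local factor def}, which is exactly the parenthetical remark recorded there. One should also double-check that $f - \Pi_{\mathcal{B}_i}f$ is legitimately $1$-bounded with support in $[N]$ so that Lemma~\ref{inverse theorem} applies verbatim; this holds since $f$ and each $\Pi_{\mathcal{B}_i}f$ take values in $[0,1]$ and are supported in $[N]$.
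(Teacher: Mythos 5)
Your proposal is correct and follows essentially the same energy-increment route as the paper: initialise with the trivial factor, invoke Lemma~\ref{inverse theorem} when the cut norm exceeds $\delta$, join in the two simple local factors, use self-adjointness and $\mathcal{B}'$-measurability of $\phi_1\phi_2$ plus Cauchy--Schwarz to get $\norm{\Pi_{\mathcal{B}'}f - \Pi_{\mathcal{B}}f}_{\ell^2} \gg \delta^{O(1)}N^{1/2}$, then Pythagoras to increment the energy, which is bounded by $N$. Your bookkeeping of dimension, resolution, and modulus across the $\ll \delta^{-O(1)}$ stages also matches the paper's.
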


\begin{proof}
We run an energy increment argument, initialising at stage $0$ with the trivial factor $\mathcal{B}_0 := \set{[N]}$.  Suppose that at stage $d$ of this iteration we have a local factor $\mathcal{B}$ of resolution  $ \gg (\delta/q)^{O(1)}N^{1/2}$, dimension at most $2d$, and modulus $qq'$ for some $q'\leq O(1/  \delta)^{O(d)}$.  In addition, suppose that we have the energy lower bound
\begin{equation}\label{energy bound}
\norm{\Pi_{\mathcal{B}} f}_{\ell^2}^2 \gg d\delta^{O(1)} N.
\end{equation}

With these assumptions in place, we query if the following holds
\begin{equation}\label{query}
\norm{f - \Pi_{\mathcal{B}}f}_{q, N} \leq \delta .
\end{equation} 
If so, then the process terminates.  If not, we show how our iteration may proceed to stage $d+1$.  

Applying the cut norm inverse theorem (Lemma \ref{inverse theorem}), we conclude that there exist 1-bounded local functions $\phi_i$ of  resolution  $ \gg (\delta/q)^{O(1)}N^{1/2}$ and modulus $qq_i$ for some $q_i\leq  \delta^{-O(1)}$ such that
$$
\abs{\ang{f - \Pi_{\mathcal{B}}f, \phi_1\phi_2}} = \abs{\sum_{x \in [N]} (f - \Pi_{\mathcal{B}}f)(x) \phi_1(x)\phi_2(x)} \gg \delta^{O(1)} N.
$$

Let $\mathcal{B}' $ denote the join of $\mathcal{B}$ and the simple local factors generated by $\phi_1$ and $\phi_2$, so that $\mathcal{B}'$ is a local factor of dimension at most $2(d+1)$, resolution $ \gg (\delta/q)^{O(1)}N^{1/2}$ and modulus $qq''$ for some $q'' \leq q'q_1q_2\leq  O(1/\delta)^{O(d+1)}$. Since $\phi_1\phi_2$ is $\mathcal{B}'$-measurable, we can use the properties listed in Lemma \ref{properties} together with the Cauchy--Schwarz inequality to deduce that
\begin{equation*}
\begin{split}
\abs{\ang{f- \Pi_{\mathcal{B}}f ,\phi_1\phi_2}} & =\abs{\ang{f- \Pi_{\mathcal{B}}f ,\Pi_{\mathcal{B}'}(\phi_1\phi_2)}} = \abs{\ang{\Pi_{\mathcal{B}'} f - \Pi_{\mathcal{B}} f , \phi_1\phi_2}}\\
& \leq N^{1/2}  \norm{\Pi_{\mathcal{B}'} f - \Pi_{\mathcal{B}}f }_{\ell^2}.
\end{split}
\end{equation*}
It follows that 
$$
\norm{\Pi_{\mathcal{B}'} f - \Pi_{\mathcal{B}}f}_{\ell^2} \gg\delta^{O(1)} N^{1/2}.
$$

Lemma \ref{properties} \eqref{orthog part} tells us that $\Pi_{\mathcal{B}} f$ is orthogonal to $\Pi_{\mathcal{B}'}f -  \Pi_{\mathcal{B}} f$, hence by  Pythagoras's theorem 
$$
\norm{\Pi_{\mathcal{B}'} f}_{\ell^2}^2 = \norm{\Pi_{\mathcal{B}}f }_{\ell^2}^2+ \norm{\Pi_{\mathcal{B}'} f- \Pi_{\mathcal{B}}f }_{\ell^2}^2.
$$
The energy bound \eqref{energy bound} follows for $\mathcal{B}'$, allowing us to proceed to the next stage of our iteration.

Since the function $f$ is $1$-bounded, the projection $\Pi_\mathcal{B}f$ is  also 1-bounded, hence the energy \eqref{energy bound} is always bounded above by $N$.  It follows that this energy increment must terminate at stage $d$ for some $d \ll \delta^{-O(1)}$, yielding the lemma.\end{proof}

\section{The density increment lemma}\label{increment proof sec}
In this section we prove Lemma \ref{increment lemma}, modelling our  argument on that given by Green and Tao \cite[Corollary 5.8]{GreenTaoNewII}. We first record, for the sake of convenience, the following immediate consequence of the triangle inequality.

\begin{lemma}[$\ell^1$-control]\label{L1 control}
Suppose that $N \geq q$.  Then for any $f_0, f_1, f_2 : [N]\to \C$ we have
$$
|\Lambda_{q, N}(f_0, f_1, f_2)| \leq N^{-1}\norm{f_i}_{\ell^1} \prod_{j \neq i} \norm{f_j}_\infty .
$$
\end{lemma}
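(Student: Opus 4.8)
The plan is to prove all three inequalities (one for each choice of $i \in \{0,1,2\}$) by the same elementary argument: estimate two of the three factors in \eqref{counting op} by their $\ell^\infty$ norms, and bound the remaining average by the $\ell^1$ norm. First I would observe that the hypothesis $N \geq q$ guarantees $M = \floor{\sqrt{N/q}} \geq 1$, so that $[M]$ is nonempty and the inner average $\E_{y\in[M]}$ in the definition of $\Lambda_{q,N}$ is well-defined.

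Next, applying the triangle inequality to \eqref{counting op} and pulling out $\prod_{j\neq i}\norm{f_j}_\infty$, the task reduces to showing that
$$
\E_{x\in[N]}\E_{y\in[M]} |f_i(x + s_i(y))| \leq N^{-1}\norm{f_i}_{\ell^1},
$$
where $s_0(y) := 0$, $s_1(y) := y$, and $s_2(y) := qy^2$. Interchanging the order of summation, it suffices to show that for each fixed $y\in[M]$ one has $\sum_{x\in[N]}|f_i(x+s_i(y))| \leq \norm{f_i}_{\ell^1}$. This is immediate: the map $x\mapsto x+s_i(y)$ is injective on $\Z$, and $f_i$ (thought of as extended by zero outside $[N]$) satisfies $\sum_{x\in[N]}|f_i(x+s_i(y))| \leq \sum_{z\in\Z}|f_i(z)| = \norm{f_i}_{\ell^1}$, since the left-hand side is a sum over a subcollection of the indices appearing on the right. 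Averaging this bound over $y\in[M]$ and dividing by $N$ yields the claim.

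I do not anticipate any genuine obstacle here — the statement is a packaging of the triangle inequality for later convenience. The only point requiring a modicum of care is the translation step: one must check that the shifted sum does not exceed the full $\ell^1$ norm, which holds precisely because translation is a bijection of $\Z$ and therefore cannot increase the total mass contributed by any fixed set of indices. The treatment is uniform in $i$, so a single display with the parameter $s_i(y)$ handles all three cases at once.
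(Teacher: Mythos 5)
Your proof is correct and follows essentially the same route as the paper: the triangle inequality plus the observation that a shifted sum of $|f_i|$ over $[N]$ cannot exceed $\norm{f_i}_{\ell^1}$. The only cosmetic difference is that the paper reparametrises the counting operator first (for $i=1$, substituting $x\mapsto x-y$) and then applies the triangle inequality, whereas you apply the triangle inequality immediately, bound the two sup-norm factors, and handle the shift at the end uniformly in $i$; both reduce to the same translation-invariance fact.
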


\begin{proof}
We prove the result for $i = 1$, the other cases being similar.  A reparametrisation gives
\begin{align*}
\abs{\Lambda_{q, N}(f_0, f_1, f_2)} &= \abs{ \E_{x\in[N]} f_1(x) \E_{y \in [M]} f_0(x-y)f_2(x+qy^2 - y)}\\
 & \leq \E_{x\in[N]} |f_1(x)| \E_{y \in [M]} |f_0(x-y)||f_2(x+qy^2 - y)|.
\end{align*}
\end{proof}
We are now in a position to prove Lemma \ref{increment lemma}, and thereby complete our proof of Theorem \ref{main}.
\begin{proof}[Proof of Lemma \ref{increment lemma}]
Let $A$ satisfy the assumptions of Lemma \ref{increment lemma}.  Increasing $\delta$ only strengthens our conclusion, so we may assume that $|A| = \delta N$.  Since $\Lambda_{q, N}(1_A) = 0$, we have that
$
\abs{\Lambda_{q, N}(1_A)  -\Lambda_{q, N}(\delta1_{[N]})} =  \delta^3 \Lambda_{q, N}(1_{[N]}) \gg \delta^3
$.

Applying the weak regularity lemma (Lemma \ref{weak regularity}), there exists a local factor $\mathcal{B}$ of dimension $d \ll \delta^{-O(1)}$, resolution $\gg (\delta/q)^{O(1)}N^{1/2}$,  and  modulus $qq'$ for some $q' \leq O(1/ \delta)^{O(d)}$ such that
$$
\norm{1_A - \Pi_\mathcal{B} 1_A}_{q, N} \leq\tfrac{1}{6} \delta^3 \Lambda_{q, N}({1_{[N]}}).
$$
Setting $f := \Pi_\mathcal{B} 1_A$, a telescoping identity thus yields
$$
\abs{\Lambda_{q, N}(f)  -\Lambda_{q, N}(\delta1_{[N]})} \geq \trecip{2}  \delta^3 \Lambda_{q, N}({1_{[N]}}) \gg \delta^3.
$$

Define the $\mathcal{B}$-measurable set
$$
S:= \set{x \in [N] : f(x) \geq (1+c)\delta},
$$
where $c>0$ is a sufficiently small absolute constant that will be chosen to make the following argument valid.  By Lemma \ref{L1 control} and a telescoping identity, we have $\abs{\Lambda_{q, N}(f)  -\Lambda_{q, N}(f1_{S^c})} \leq 3|S|/N$, so that
$$
\tfrac{|S|}{N} + \abs{\Lambda_{q, N}(f1_{S^c})  -\Lambda_{q, N}(\delta1_{[N]})} \gg  \delta^3 .
$$
Yet another telescoping identity, in conjunction with  Lemma \ref{L1 control}, gives 
\begin{align*}
\abs{\Lambda_{q, N}(f1_{S^c})  -\Lambda_{q, N}(\delta 1_{[N]})} & \ll \tfrac{\delta^2 }{N}
\norm{f1_{S^c}- \delta1_{[N]}}_{\ell^1} \leq \tfrac{\delta^2 }{N} \norm{f - \delta1_{[N]}}_{\ell^1} + \tfrac{|S|}{N},
\end{align*}
so that
$$
 |S| + \delta^2 \norm{f - \delta1_{[N]}}_{\ell^1}  \gg \delta^3 N.
$$

Since $f - \delta1_{[N]}$ has mean zero, its $\ell^1$-norm is equal to twice the $\ell^1$-norm of its positive part. The function $\brac{f - \delta1_{[N]}}_+$ can only exceed $c \delta$ on $S$, so taking $c$ small enough gives 
$
|S| \gg \delta^3N
$. 
Letting $B$ denote the largest element of $\mathcal{B}$ for which $B \subset S$, the bound in Lemma \ref{atom bound}  yields
$$
|B| \gg q^{-O(1)}\delta^{O(d)} 2^{-O(d)} N^{1/2}.
$$
By construction (see Definition \ref{local factor def}), the set $B$ is an arithmetic progression of common difference $qq'$ with $q' \leq O(1/\delta)^{O(d)}$.  Moreover, the density of $A$ on $B$ is equal to the value of $f(x)$ for any $x \in  B$, and this is at least $(1+c)\delta$ by the definition of $S$.
\end{proof}

\section{Global control by major arc Fourier coefficients}\label{global sec}

The purpose of this section is to prove Theorem \ref{global control} and Corollary \ref{global control cor}. We begin with an alternative version of Lemma \ref{partial inverse theorem}, replacing the rigid local function found therein with something more continuous. 
\begin{definition}[$C$-Lipschitz]
We say that $\phi : \Z \to \C$ is \emph{$C$-Lipschitz along $q \cdot \Z$} if for any $x, y \in \Z$ we have
$$
|\phi(x+qy) - \phi(x)| \leq C |y|.
$$
\end{definition}
Recalling our definition for the Fej\'er kernel \eqref{fejer}, we observe that a function of the form 
\begin{equation}\label{h def}
x \mapsto \sum_h \mu_H(h) f(x+qh)
\end{equation} 
is Lipschitz along $q \cdot \Z$.

\begin{lemma}\label{h lipschitz}
Let $q, H$ be positive integers and $f : \Z \to \C$ be 1-bounded.
If $\phi$ is defined as in \eqref{h def}, then $\phi$ is $O(H^{-1})$-Lipschitz along $q \cdot \Z$.
\end{lemma}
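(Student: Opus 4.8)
The plan is to unwind the definition of $\phi$ and estimate the difference $\phi(x+qy) - \phi(x)$ directly by comparing the two Fej\'er-weighted averages. Writing out both sides,
\[
\phi(x+qy) - \phi(x) = \sum_h \mu_H(h) f(x+qy+qh) - \sum_h \mu_H(h) f(x+qh) = \sum_h \bigl(\mu_H(h-y) - \mu_H(h)\bigr) f(x+qh),
\]
after the substitution $h \mapsto h-y$ in the first sum. Since $f$ is $1$-bounded, the triangle inequality gives $|\phi(x+qy) - \phi(x)| \leq \sum_h |\mu_H(h-y) - \mu_H(h)|$, so the whole problem reduces to the elementary fact that the $\ell^1$-modulus of continuity of the normalised Fej\'er kernel is $O(|y|/H)$; that is,
\[
\sum_h |\mu_H(h-y) - \mu_H(h)| \ll |y|/H.
\]

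To prove this last inequality I would use the explicit formula $\mu_H(h) = \floor{H}^{-1}\bigl(1 - |h|/\floor{H}\bigr)_+$. The function $t \mapsto \bigl(1 - |t|/\floor{H}\bigr)_+$ is piecewise linear with slope of absolute value $1/\floor{H}$ where it is nonzero, so $|\mu_H(h-y) - \mu_H(h)| \leq |y|/\floor{H}^2$ for every $h$. Moreover $\mu_H(h-y) - \mu_H(h)$ is nonzero only when $h$ or $h-y$ lies in the support interval $(-\floor{H}, \floor{H})$, a set of at most $2\floor{H} + |y|$ integers; but we may as well assume $|y| \leq \floor{H}$ (otherwise the bound $\sum_h |\mu_H(h-y) - \mu_H(h)| \leq \sum_h \mu_H(h-y) + \sum_h \mu_H(h) = 2 \ll |y|/H$ is immediate since $|y|/\floor{H} \geq 1$, using $\floor H \asymp H$), in which case there are $O(\floor{H})$ relevant values of $h$. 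Multiplying the pointwise bound $|y|/\floor{H}^2$ by the count $O(\floor{H})$ yields $O(|y|/\floor{H}) = O(|y|/H)$, as desired.

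I do not anticipate a genuine obstacle here — this is a routine verification whose only mild subtlety is making sure the implied constant is uniform over $y$ (handled by the case split on whether $|y| \leq \floor{H}$) and remembering that $\floor{H} \asymp H$ for positive integers $H$ (indeed $\floor H = H$), so that $1/\floor{H}$ and $1/H$ are interchangeable up to constants. One could alternatively avoid the case split entirely by noting $\sum_h|\mu_H(h-y)-\mu_H(h)| \leq \min\{2,\, C|y|/H\}$ and observing that in either regime this is $\ll |y|/H$. Either way, combining this with the reduction in the first paragraph completes the proof that $\phi$ is $O(H^{-1})$-Lipschitz along $q\cdot\Z$.
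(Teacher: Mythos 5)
Your proof is correct and follows essentially the same route as the paper: pointwise-bound $|\mu_H(h-y)-\mu_H(h)|\leq |y|/\floor{H}^2$ using the piecewise-linear structure of the Fej\'er kernel, then multiply by the number of $h$ at which the difference is nonzero. The only cosmetic difference is that the paper observes this set is contained in $(-H,H)\cup(y-H,y+H)$, which has at most $4H$ elements for \emph{every} $y$, so your case split on whether $|y|\leq\floor{H}$ is not actually needed.
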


\begin{proof} 
  Recalling \eqref{fejer}, the triangle inequality for $|\cdot|$ and $\max\{\cdot, 0\}$ show that  $|\mu_H(h+y) - \mu_H(h)| \leq |y|/\floor{H}^2$ for all $h, y \in \Z$.  Hence a change of variables gives
$$
|\phi(x+qy) - \phi(x)|  \leq  \sum_h |\mu_H(h-y) - \mu_H(h)| \ll \frac{|y|}{H^2}\sum_{h \in (-H, H)\cup(y-H, y+H) } 1.
$$
\end{proof}

Now we prove another partial cut norm inverse theorem, this time getting correlation with functions that are Lipschitz along progressions with small common difference.

\begin{lemma}[Partial cut norm inverse theorem II]\label{partial inverse theorem II}
Let $N$ be a positive integer, $ \delta>0$, and $f, g, h:\Z\to\C$ be $1$-bounded functions with support in $[N]$.  
Suppose that  
\[
\abs{\E_{x \in [N]} \E_{y \in [N^{1/2}]} f(x) g(x+y) h(x+y^2)}\geq\delta .
\]
Then either $N \ll \delta^{-O(1)}$, or there exists $q \ll \delta^{-O(1)}$ and  a 1-bounded function $\phi$ that is $O(\delta^{-O(1)} N^{-1/2})$-Lipschitz along $q \cdot \Z$ such that 
$$
\sum_{x\in[N]} g(x)\phi(x) \gg \delta^{O(1)} N.
$$
\end{lemma}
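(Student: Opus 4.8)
The plan is to apply \cite[Theorem 7.1]{PelusePrendivilleQuantitative} exactly as in the proof of Lemma~\ref{partial inverse theorem}, but with $q = 1$, and then to extract a Lipschitz function directly from the averaged object that appears along the way, rather than chopping into arithmetic progressions and passing to conjugate phases as in that proof. With $M = \lfloor N^{1/2}\rfloor$, the hypothesis reads $\abs{\Lambda_{1,N}(f,g,h)} \geq \delta$ with $g$ occupying the middle slot, so we are in the ``latter case'' of the proof of Lemma~\ref{partial inverse theorem}. Running that argument, we may assume $N \gg \delta^{-O(1)}$, and we obtain a positive integer $q \ll \delta^{-O(1)}$ and a scale $N' \gg \delta^{O(1)} N^{1/2}$ such that
\[
\sum_x \abs{\sum_{y \in [N']} g(x+qy)} \gg \delta^{O(1)} N N'.
\]

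Now set $G(x) := \sum_{y \in [N']} g(x+qy)$. The advantage of working with $G$, instead of the local function supplied by Lemma~\ref{partial inverse theorem}, is that $G/N'$ is already smooth in the required sense: it is $1$-bounded, and since for any $k \in \Z$ the sums defining $G(x+qk)$ and $G(x)$ differ by at most $2\abs{k}$ summands, each of modulus at most $1$, the function $G/N'$ is $(2/N')$-Lipschitz along $q\cdot\Z$; because $N' \gg \delta^{O(1)} N^{1/2}$ this Lipschitz constant is $O(\delta^{-O(1)} N^{-1/2})$, exactly the size the lemma demands. The one substantive issue is that the displayed bound only tells us that $g$ correlates with $\abs{G}$, that is, with the $\ell^1$-mass of $G$, and not visibly with $G$ itself.

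To bridge this gap I would pass from the $\ell^1$-bound to an $\ell^2$-bound by Cauchy--Schwarz. Since $g$ is supported in $[N]$, the function $G$ is supported on an interval of length at most $N + qN' \leq 2N$ (using $qN' \ll \delta^{-O(1)} N^{1/2}$ and that $N$ is large), whence
\[
\norm{G}_{\ell^2}^2 \geq \frac{\brac{\sum_x \abs{G(x)}}^2}{\abs{\supp(G)}} \gg \delta^{O(1)} N (N')^2.
\]
Expanding $\norm{G}_{\ell^2}^2 = \sum_x G(x)\overline{G(x)}$, substituting the definition of $G$ in the first factor and reparametrising the sum over $x$, one finds
\[
\sum_{y \in [N']} \sum_x g(x)\, \overline{(G/N')(x-qy)} = \frac{1}{N'}\norm{G}_{\ell^2}^2 \gg \delta^{O(1)} N N';
\]
as this is a sum of $N'$ terms whose total is a nonnegative real number, some $y_0 \in [N']$ satisfies $\abs{\sum_x g(x)\,\overline{(G/N')(x-qy_0)}} \gg \delta^{O(1)} N$. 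Taking $\phi(x) := \overline{(G/N')(x-qy_0)}$, multiplied by a unimodular constant if one wants the inner product itself (and not merely its modulus) to be large, completes the argument: $\phi$ is $1$-bounded; translating the argument by the multiple $qy_0$ of $q$ and taking complex conjugates each preserve the property of being Lipschitz along $q\cdot\Z$ with the same constant, so $\phi$ is $(2/N')$-Lipschitz along $q\cdot\Z$; and $\sum_{x \in [N]} g(x)\phi(x) \gg \delta^{O(1)} N$ since $g$ is supported in $[N]$.

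The only genuine obstacle is converting correlation of $g$ with $\abs{G}$ into correlation of $g$ with the Lipschitz function $G/N'$, and this is precisely the Cauchy--Schwarz step that trades $\norm{G}_{\ell^1}$ for $\norm{G}_{\ell^2}^2$; everything else is routine bookkeeping with the definitions of the counting operator and of ``Lipschitz along $q\cdot\Z$''. In contrast with Lemma~\ref{partial inverse theorem}, no partitioning into progressions or passage to conjugate phases is needed here, exactly because the averaged function $G/N'$ carries the required smoothness automatically.
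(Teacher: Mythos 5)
Your proof is correct and follows essentially the same route as the paper: apply \cite[Theorem~7.1]{PelusePrendivilleQuantitative}, upgrade the resulting $\ell^1$-correlation to an $\ell^2$-energy bound via Cauchy--Schwarz, and read off a Lipschitz function from the averaged object. The only cosmetic deviation is in the final step: rather than pigeonholing over $y_0\in[N']$ to fix a single translate of the one-sided average $G/N'$, the paper retains the full double average, obtaining the Fej\'er-convolved function $\phi(x)=\E_{y_1,y_2\in[M]}\overline{g(x+q(y_1-y_2))}$, which is of the form treated by Lemma~\ref{h lipschitz} and hence immediately $O(M^{-1})$-Lipschitz along $q\cdot\Z$.
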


\begin{proof}
Applying \cite[Theorem 7.1]{PelusePrendivilleQuantitative}, we obtain positive integers $q\ll\delta^{-O(1)}$ and $N^{1/2} \geq M \gg \delta^{O(1)}N^{1/2}$ such that
\[
\sum_{x}\abs{\sum_{y\in[M]}g(x+qy)}\gg \delta^{O(1)}NM.
\]
By the Cauchy--Schwarz inequality and a change of variables, we have
$$
\sum_{x}g(x)\sum_{y_1, y_2\in[M]}\overline{g(x+q(y_1 - y_2))}\gg \delta^{O(1)}NM^2.
$$
Setting
$$
\phi(x) := \E_{y_1, y_2\in[M]}\overline{g(x+q(y_1 - y_2))},
$$
Lemma \ref{h lipschitz} shows this function has the required properties.
\end{proof}

Before proving Theorem~\ref{global control}, we record two standard facts.

\begin{lemma}\label{rog lem}
There are at most  $O(N^4)$ solutions $x \in [N]^6$ to the equation
$$
 x_1^2+x_2^2+x_3^2 = x_4^2 +x_5^2 + x_6^2.
$$
\end{lemma}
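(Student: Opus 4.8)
The plan is to count lattice points on the quadric $\norm{\mathbf{x}}_2^2=\norm{\mathbf{y}}_2^2$ directly, avoiding any use of the divisor function (which would cost logarithms). Write $\mathbf{x}=(x_1,x_2,x_3)$, $\mathbf{y}=(x_4,x_5,x_6)$ and perform the substitution $\mathbf{u}:=\mathbf{x}-\mathbf{y}$, $\mathbf{v}:=\mathbf{x}+\mathbf{y}$, so that $\norm{\mathbf{x}}_2^2-\norm{\mathbf{y}}_2^2=\mathbf{u}\cdot\mathbf{v}$ and the equation becomes $\mathbf{u}\cdot\mathbf{v}=0$. Since $\mathbf{x}=\tfrac12(\mathbf{u}+\mathbf{v})$, the map $(\mathbf{x},\mathbf{y})\mapsto(\mathbf{u},\mathbf{v})$ is injective, and when $\mathbf{x},\mathbf{y}\in[N]^3$ the new variables satisfy $\norm{\mathbf{u}}_\infty<N$ and $\norm{\mathbf{v}}_\infty\le 2N$. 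Hence it suffices to show
$$
\#\bigset{(\mathbf{u},\mathbf{v})\in\Z^3\times\Z^3:\mathbf{u}\cdot\mathbf{v}=0,\ \norm{\mathbf{u}}_\infty<N,\ \norm{\mathbf{v}}_\infty\le 2N}\ll N^4.
$$

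The term $\mathbf{u}=\mathbf{0}$ contributes at most the number of admissible $\mathbf{v}$, which is $O(N^3)$. For each fixed $\mathbf{u}\neq\mathbf{0}$, write $\mathbf{u}=g\mathbf{w}$ with $\mathbf{w}$ primitive and $g=\gcd(u_1,u_2,u_3)$; then $\mathbf{u}^\perp\cap\Z^3=\mathbf{w}^\perp\cap\Z^3$ is a rank-two lattice of covolume $\norm{\mathbf{w}}_2=\norm{\mathbf{u}}_2/g$ whose shortest nonzero vector has length at least $1$. The standard bound for the number of lattice points of a rank-two lattice in a box of side $R$, namely $\ll R^2/\mathrm{covol}+R/\lambda_1+1$ (via Minkowski's second theorem), therefore gives that the number of admissible $\mathbf{v}\in\mathbf{u}^\perp\cap\Z^3$ is $\ll N^2 g/\norm{\mathbf{u}}_2+N+1$. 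The $N+1$ terms sum over the $O(N^3)$ choices of $\mathbf{u}$ to $O(N^4)$, so it remains to bound $N^2\sum_{\mathbf{u}\neq\mathbf{0}}g/\norm{\mathbf{u}}_2=N^2\sum_{\mathbf{u}\neq\mathbf{0}}\norm{\mathbf{w}}_2^{-1}$. Grouping the $\mathbf{u}$'s by their primitive part $\mathbf{w}$ and using that there are at most $N/\norm{\mathbf{w}}_\infty$ admissible scalings $g$, this is
$$
\ll N^3\twosum{\mathbf{w}\neq\mathbf{0}}{\norm{\mathbf{w}}_\infty<N}\frac{1}{\norm{\mathbf{w}}_2\norm{\mathbf{w}}_\infty}\ll N^3\twosum{\mathbf{w}\neq\mathbf{0}}{\norm{\mathbf{w}}_2\ll N}\frac{1}{\norm{\mathbf{w}}_2^2}\ll N^3\cdot N=N^4,
$$
where the last estimate uses that $\Z^3$ has $O(m^2)$ vectors of Euclidean norm $\asymp m$. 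Combining the two cases yields the claimed bound.

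The only points requiring a little care are the evaluation $\mathrm{covol}(\mathbf{w}^\perp\cap\Z^3)=\norm{\mathbf{w}}_2$ for primitive $\mathbf{w}$ (immediate once one observes that the consecutive level sets $\set{\mathbf{v}:\mathbf{w}\cdot\mathbf{v}=c}$, $c\in\Z$, are parallel hyperplanes at distance $\norm{\mathbf{w}}_2^{-1}$) and the box lattice-point count in terms of the successive minima; both are completely standard, so I do not anticipate any genuine obstacle. An essentially identical argument phrased via the representation function $R_3$ would give the fourth-moment bound $\sum_{n\le X}R_3(n)^2\ll X^2$, from which the lemma follows on taking $X=3N^2$; one could also simply cite this well-known estimate.
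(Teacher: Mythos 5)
Your proof is correct and genuinely different in character from the paper's, which does not give an argument at all but simply cites the circle method (Davenport's book) or \cite[Proposition 1.10]{BourgainLambda}. Your approach is a self-contained geometry-of-numbers argument: the change of variables $(\mathbf{x},\mathbf{y})\mapsto(\mathbf{u},\mathbf{v})=(\mathbf{x}-\mathbf{y},\mathbf{x}+\mathbf{y})$ turns the quadric into the bilinear condition $\mathbf{u}\cdot\mathbf{v}=0$; one then fibres over $\mathbf{u}\neq\mathbf{0}$ and counts lattice points of the rank-two lattice $\mathbf{w}^\perp\cap\Z^3$ (with $\mathbf{w}$ the primitive part of $\mathbf{u}$) in a box of side $\ll N$, using the covolume identity $\mathrm{covol}(\mathbf{w}^\perp\cap\Z^3)=\norm{\mathbf{w}}_2$, the trivial lower bound $\lambda_1\geq 1$ for a sublattice of $\Z^3$, and the standard $\prod_i(1+R/\lambda_i)$ estimate via Minkowski's second theorem. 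The remaining sums are routine: the $O(N+1)$ error over $O(N^3)$ choices of $\mathbf{u}$ gives $O(N^4)$, and grouping the main term by primitive direction leaves $N^3\sum_{0<\norm{\mathbf{w}}_2\ll N}\norm{\mathbf{w}}_2^{-2}\ll N^4$. I checked the covolume computation (the hyperplane-spacing argument you sketch is exactly right), the injectivity of the substitution, and the dyadic bound on $\sum\norm{\mathbf{w}}_2^{-2}$; all are sound. What your route buys is complete elementarity and a short proof with explicit constants; what the circle-method route buys (were one to carry it out, as in Davenport) is an asymptotic rather than merely an upper bound, which is more than the lemma needs. Your closing remark about deducing $\sum_{n\leq X}R_3(n)^2\ll X^2$ directly is also correct, though the naive bound $R_3(n)\ll n^{1/2+\eps}$ would lose an $X^\eps$, so the geometry-of-numbers argument you actually give is preferable to that shortcut.
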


\begin{proof}
There are a number of ways to prove this. Perhaps the most robust is via the circle method, see \cite{DavenportAnalytic}. The result can be read out of \cite[Proposition 1.10]{BourgainLambda}.
\end{proof}

\begin{lemma}[Weyl's inequality]\label{weyl-ineq}
Let $P \subset \Z$ be an arithmetic progression with common difference $q$ and let $0< \delta \leq 1$.  Suppose that 
$$
\abs{\sum_{x \in P} e(\alpha x^2) } \geq \delta |P|.
$$
Then either $|P| \ll \delta^{-O(1)}$ 
or there exists  a positive integer $q' \ll \delta^{-O(1)}$ such that  
$$\|q'q^2\alpha\| \ll \delta^{-O(1)}|P|^{-2}.$$
\end{lemma}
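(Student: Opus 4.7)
The plan is to reduce the assertion to the classical quadratic Weyl inequality on an integer interval and then invoke it via Weyl differencing. Parametrise $P = \{a + qn : n \in [N']\}$ with $N' = |P|$, so that
\[
\sum_{x \in P} e(\alpha x^2) = e(\alpha a^2) \sum_{n \in [N']} e(\beta n^2 + \gamma n),
\]
where $\beta := \alpha q^2$ and $\gamma := 2\alpha a q$. The linear coefficient $\gamma$ will play no role in what follows, so the task reduces to the following standard statement: if a quadratic exponential sum on $[N']$ with leading coefficient $\beta$ has size at least $\delta N'$, then either $N' \ll \delta^{-O(1)}$ or there is a positive integer $q' \ll \delta^{-O(1)}$ with $\|q'\beta\| \ll \delta^{-O(1)}(N')^{-2}$. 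Unpacking $\beta = \alpha q^2$ is then precisely the conclusion of the lemma.

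To establish this reduced statement, I would carry out the Weyl differencing / van der Corput trick. For a parameter $1 \leq H \leq N'$, the shift identity followed by Cauchy--Schwarz yields
\[
H^2 \delta^2 (N')^2 \ll N' \sum_{|h| \leq H}(H - |h|) \biggabs{\sum_{n \in I_h} e\bigbrac{2\beta h n + \beta h^2 + \gamma h}},
\]
where $I_h \subset [N']$ is an interval depending on $h$. The inner sum is a linear geometric series in $n$ (the $h$-dependent terms are constants), so it is bounded by $\min(N', \|2\beta h\|^{-1})$. Choosing $H \asymp N'$ and simplifying gives
\[
\sum_{|h| \leq N'} \min\bigbrac{N',\ \|2\beta h\|^{-1}} \gg \delta^{O(1)} (N')^2.
\]
A dyadic decomposition of the $\min$ then produces a scale at which we obtain $\gg \delta^{O(1)} N'$ integers $h \in [N']$ for which $\|2\beta h\| \ll \delta^{-O(1)}/N'$.

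The final step converts this ``many small values'' estimate into a single Diophantine approximation to $\beta$. Using the subadditivity $\|2\beta(h_1 - h_2)\| \leq \|2\beta h_1\| + \|2\beta h_2\|$ together with the pigeonhole principle applied to differences of the good $h$'s, one extracts a positive integer $q'' \ll \delta^{-O(1)}$ with $\|2q''\beta\| \ll \delta^{-O(1)}(N')^{-2}$. Setting $q' := 2q''$ and recalling that $\beta = \alpha q^2$ finishes the proof. The main obstacle is this very last passage from the level-set information to a polynomial-denominator rational approximation; it is the delicate (but entirely classical) heart of Weyl's inequality, and is carried out in standard references on the circle method such as Vaughan's \emph{The Hardy--Littlewood Method}, Chapter~2, or Davenport's \emph{Analytic Methods for Diophantine Equations and Diophantine Inequalities}. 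Given the name attached to the lemma, the authors likely intend to quote this classical statement directly rather than redevelop it.
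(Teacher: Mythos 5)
Your route is essentially the paper's. The proof in the paper consists of exactly your first display --- writing $P = x_0 + q\cdot[N']$ and expanding $(x_0+qy)^2$ so that the sum becomes a quadratic Weyl sum on $[N']$ with leading coefficient $\beta=q^2\alpha$ --- followed by a citation of a ready-made classical statement (Lemma A.11 of Green--Tao, \emph{Quadratic uniformity of the M\"obius function}), which is precisely the ``reduced statement'' you formulate. So your closing guess is correct: the authors quote the classical inequality rather than redevelop it, and your reduction step matches theirs verbatim.

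One caution about your sketch of the classical argument: the last step, as literally described, does not deliver the stated strength. Pigeonholing among the $\gg \delta^{O(1)}N'$ good values of $h$ and using subadditivity on a difference $q''=h_1-h_2 \ll \delta^{-O(1)}$ only yields $\norm{2q''\beta}_\T \ll \delta^{-O(1)}/N'$, i.e.\ an approximation of quality $1/N'$ rather than the required $1/(N')^{2}$. The gain of the extra factor of $N'$ is exactly the content of the Vinogradov-type lemma: having found such a $q''$ with $\norm{2q''\beta}_\T \leq 2\epsilon$, where $\epsilon \asymp \delta^{-O(1)}/N'$, one splits $[N']$ into residue classes modulo $q''$ and observes that along each class the fractional parts of $2\beta h$ move in an arithmetic progression of step $\norm{2q''\beta}_\T$; counting how many terms of such a progression can lie within $\epsilon$ of $\Z$, and comparing with the $\gg \delta^{O(1)}N'$ good $h$, forces $\norm{2q''\beta}_\T \ll \delta^{-O(1)}\epsilon/N' \ll \delta^{-O(1)}(N')^{-2}$ unless $N' \ll \delta^{-O(1)}$. (Equivalently one runs the Dirichlet/continued-fraction argument in the Vaughan or Davenport references you cite; also note the $h=0$ term contributes $N'$ to the differenced sum, which is where the alternative $|P|\ll\delta^{-O(1)}$ first enters.) Since you explicitly defer this ``delicate heart'' to the literature, this is not a fatal gap, but subadditivity plus pigeonhole alone would not suffice.
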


\begin{proof}
Let $P = x_0 + q\cdot [N]$, so that our exponential sum becomes
$$
\sum_{x \in P} e(\alpha x^2) = \sum_{y \in [N]} e(\alpha q^2 y^2 + 2\alpha q x_0 y + \alpha x_0^2).
$$
Applying \cite[Lemma A.11]{GreenTaoQuadratic}, either $N \ll \delta^{-O(1)}$ or the conclusion of our lemma follows.
\end{proof}

\begin{proof}[Proof of Theorem \ref{global control}]
Write $\Lambda_N$ for the counting operator $\Lambda_{1, N}$ (that is, the average \eqref{counting op} with $q=1$).  Let $f, g, h : [N] \to \C$ be 1-bounded functions satisfying
$$
|\Lambda_{N}(f, g, h) |  \geq\delta .
$$
Define the seminorm 
$$
\norm{g} := \sup\set{|\Lambda_N(g_1, g, g_2)|: |g_i| \leq 1 \text{ and } \supp(g_i) \subset [N]}.
$$
and the dual function
$$
F(x) : = \E_{y \in [N^{1/2}]} f(x-y) h(x+y^2-y).
$$
We follow the argument in the proof of Lemma \ref{inverse theorem} to  deduce that
$$
\norm{F} \geq \delta^2 .
$$
Hence, by Lemma \ref{partial inverse theorem II}, there exists $q \ll \delta^{-O(1)}$ and  a 1-bounded function $\phi$ that is $O(\delta^{-O(1)} N^{-1/2})$-Lipschitz along $q \cdot \Z$ and satisfies 
$$
\sum_{x\in[N]} F(x)\phi(x) \gg \delta^{O(1)} N.
$$
Expanding the definition of the dual function, we have
$$
 \sum_{x\in [N]}\sum_{y \in [N^{1/2}]}f(x)\phi(x+y) h(x+y^2) \gg \delta^{O(1)}N^{3/2}.
$$

Let us partition $\Z$ into arithmetic progressions $P$ each of common difference $q$ and length $M$, where $M$ will be chosen shortly. For each such arithmetic progression $P$, fix an element $y_P \in P$. Using the Lipschitz property of $\phi$, for any $x \in \Z$ and $y \in P$ we have
$$
 |\phi(x+y_P) - \phi(x+y)| \ll \delta^{-O(1)}M N^{-1/2}.
$$
Hence,
$$
 \abs{\sum_P\sum_{x\in [N]}\sum_{y \in P\cap [N^{1/2}]}f(x)[\phi(x+y) - \phi(x + y_P) ]h(x+y^2)} \ll \delta^{-O(1)}MN.
$$
We can therefore take $M$ sufficiently small to satisfy both $M \gg \delta^{O(1)} N^{1/2}$ and
$$
\abs{\sum_P\sum_x\sum_{y \in P\cap [N^{1/2}]}f(x) \phi(x + y_P) h(x+y^2)} \gg \delta^{O(1)}N^{3/2}.
$$

Set $f_P(x) := f(x) \phi(x+y_P)$.  The number of progressions $P$ that intersect $[N^{1/2}]$ is at most  $O(N^{1/2}M^{-1} + q) = O(\delta^{-O(1)})$. Therefore, the pigeon-hole principle gives a progression $P$ for which
\begin{equation}\label{shorter sarko}
\abs{\sum_x\sum_{y \in P\cap[N^{1/2}]} f_P(x) h(x+y^2)} \gg \delta^{O(1)}N^{3/2}.
\end{equation}
  In particular, $|P \cap [N^{1/2}]| \gg \delta^{O(1)} N^{1/2}$.

Writing $S_{P}(\alpha)$ for $\sum_{y \in P\cap [N^{1/2}]} e\brac{\alpha y^2}$, the orthogonality relations allow us to reformulate \eqref{shorter sarko} as
\begin{align*}
 \abs{\int_\T \hat{f}_P(\alpha) \hat{h}(-\alpha) S_{P}(\alpha) d \alpha} \gg \delta^{O(1)}N^{3/2}.
\end{align*}
Let $\eta > 0$ be a parameter to be determined shortly, and define the major arcs 
$$
\mathfrak{M} := \set{\alpha \in \T : |S_{P}(\alpha)| \geq \eta N^{1/2}}.
$$
 Parseval's identity then gives
$$
\abs{\int_{\T\setminus\mathfrak{M}} \hat{f}_P(\alpha) \hat{h}(-\alpha) S_{P}(\alpha) d \alpha} \leq \eta N^{1/2} \bignorm{\hat{f}_P}_2\bignorm{\hat{h}}_2 \leq \eta N^{3/2}.
$$
Hence we may take $\eta \gg \delta^{O(1)}$ and ensure that 
\begin{align*}
 \abs{\int_{\mathfrak{M}} \hat{f}_P(\alpha) \hat{h}(-\alpha) S_{P}(\alpha)d \alpha} \gg \delta^{O(1)}N^{3/2}.
\end{align*}

By Lemma \ref{rog lem} and orthogonality, we have $\norm{S_{P}}_6 \ll  N^{1/3}$. Thus, by H\"older's inequality, we get that
$$
\abs{\int_{\mathfrak{M}} \hat{f}_P(\alpha) \hat{h}(-\alpha) S_{P}(\alpha)d \alpha} \leq  \bignorm{\hat{f}_P}_2 \bignorm{\hat{h}}_2^{2/3} \bignorm{S_P}_6 \sup_{\alpha \in \mathfrak{M}}  \bigabs{\hat{h}(-\alpha)}^{1/3}.
$$
We therefore deduce that there exists $\alpha \in \mathfrak{M}$ such that
$$
\bigabs{\hat{h}(-\alpha)} \gg \delta^{O(1)} N.
$$

Finally, an application of Weyl's inequality (Lemma \ref{weyl-ineq}) shows that if $-\alpha \in \mathfrak{M}$ then $\alpha$ has the required Diophantine approximation property.
\end{proof}

\begin{proof}[Proof of Corollary \ref{global control cor}]
Let $\alpha\in\R$ be the frequency and $q$ the positive integer provided by Theorem \ref{global control}.  For any integer $a$ and positive integer $M$, if $x, y \in a + q\cdot[M]$, then
 $$
 \abs{e(\alpha x) - e(\alpha y)} \leq 2\pi\norm{\alpha(x-y)} \ll \delta^{-O(1)} M N^{-1}.
 $$
 Partitioning $\Z$ into arithmetic progressions of common difference $q$ and length $M$ then gives
 $$
 \delta^{O(1)} N \ll \sum_P\Bigabs{\sum_{x \in P} h(x)} +   \delta^{-O(1)} M .
 $$
We thus take $M \gg \delta^{O(1)} N$ sufficiently small to ensure that
$$
 \delta^{O(1)} N \ll \sum_P\Bigabs{\sum_{x \in P} h(x)}.
 $$
Write $\theta_P$ for the conjugate phase of the inner sum.  Then the map $x \mapsto \sum_P\theta_P1_P(x)$ is a local function of resolution $\gg \delta^{O(1)}N$ and modulus $\ll \delta^{-O(1)}$, yielding the corollary.
\end{proof}

\section{Longer progressions}\label{longer section}
As mentioned in \S\ref{longer intro}, the main obstacle to generalising our polylogarithmic bound to longer configurations such as \eqref{longer} is in obtaining an appropriate generalisation of Lemma \ref{inverse theorem}; in particular, showing that if the relevant counting operator is large, then \emph{all} functions must correlate with a product of a bounded number of local functions.

Let us demonstrate where the argument breaks down for $m > 2$. Given polynomials as in \eqref{longer} and 1-bounded functions $f_0, f_1, \dots, f_m : [N] \to \C$, define the counting operator
\begin{multline*}
\Lambda_{P_1, \dots, P_m}^N(f_0, f_1, \dots, f_m) := \\\E_{x \in [N]} \E_{y \in [N^{1/\deg P_m}]}f_0(x)f_1(x+P_1(y))\dotsm f_m(x+P_m(y)) .
\end{multline*}
 Using the main technical result of~\cite{PeluseBounds}, \cite[Theorem 3.3]{PeluseBounds}, one can show that if 
$$
\abs{\Lambda_{P_1, \dots, P_m}^N(f_0, f_1, \dots , f_m)} \geq \delta ,
$$
then both $f_0$ and $f_1$ correlate with local functions $\phi_0$ and $\phi_1$. Combining this with a dual function argument, as in our proofs of Theorem \ref{global control} and Lemma \ref{inverse theorem}, one may conclude that 
$$
\abs{\Lambda_{P_1, \dots, P_m}^N(\phi_0, \phi_1, f_2, \dots , f_m)} \gg \delta^{O(1)},
$$
If $m = 2$, one can then pigeon-hole in the smaller $y$ variable appearing in the counting operator (as we do in the proof of Lemma \ref{inverse theorem}) to conclude that $f_2$ correlates with a product of two local functions. It is this simple pigeon-holing argument that fails when $m > 2$.

\subsection{An alternative strategy for longer progressions}
A more productive strategy is to follow our proof of Theorem \ref{global control} instead of Theorem \ref{main}.  In proving Theorem \ref{global control} we replace the counting operator $\Lambda_{y, y^2}^N(f_0, f_1, f_2)$ with $\Lambda_{y, y^2}^N(f_0, \phi, f_2)$, where $\phi$ is a local function that is constant on progressions of length $\approx N^{1/2}$ with common difference of size $\approx O(1)$.  Provided that we pass to appropriate subprogressions in all of the variables appearing in our counting operator, we can exploit the properties of this local function and `remove' it from our count. In effect (after passing to subprogressions of bounded common difference), we replace the count $\Lambda_{y, y^2}^N(f_0, f_1, f_2)$ with one of the form $\Lambda_{Q}^{N'}(f_0, f_2)$, where $Q$ is a quadratic polynomial and $N'$ is slightly smaller than $N$. 

Generalising this approach, one can use \cite[Theorem 3.3]{PeluseBounds} to replace the counting operator $\Lambda_{P_1, \dots, P_m}^N(f_0, f_1, \dots , f_m)$ with $\Lambda_{P_1, \dots, P_m}^N(f_0, \phi, f_2, \dots , f_m)$, where $\phi$ is a local function. Provided that this local function has resolution $\gg N^{\deg P_1 / \deg P_m}$ and common difference $q \ll 1$, we have
$$
\phi(x+P_1(y)) \approx \phi(x) 
$$
for any $x\in \Z$ and any $y$ constrained to a subprogression of common difference $q$ and length $\approx N^{\deg P_1 / \deg P_m}$.
Passing to subprogressions in $x$ and $y$, one should then be able to replace the operator $$\Lambda_{P_1, \dots, P_m}^N(f_0, \phi, f_2, \dots , f_m)$$ by one of the form $$\Lambda_{Q_2, \dots, Q_m}^{N'}(f_0, f_2, \dots , f_m).$$
Applying induction on $m$ may then allow one to show that every function in the original counting operator correlates with a local function.

The main impediment to carrying out this strategy is that the polynomials $Q_2$, \dots, $Q_m$, which arise on passing to a subprogression, may not satisfy the hypotheses required to reapply \cite[Theorem 3.3]{PeluseBounds}. It is likely that the polynomials are sufficiently well-behaved for the arguments of \cite{PeluseBounds} to remain valid, but we leave this verification to the energetic reader.



\end{document}